\documentclass{dcds-b}
\usepackage{amsmath}
  \usepackage{paralist}
  \usepackage{graphics} 
  \usepackage{epsfig} 
\usepackage{graphicx}  \usepackage{epstopdf}
\usepackage{chemarr}
 \usepackage[colorlinks=true]{hyperref}
\hypersetup{urlcolor=blue, citecolor=red}

  \textheight=8.2 true in
   \textwidth=5.0 true in
    \topmargin 30pt
     \setcounter{page}{1}



\newtheorem{theorem}{Theorem}[section]

\newtheorem{proposition}{Proposition}

\theoremstyle{definition}
\newtheorem{definition}[theorem]{Definition}
\newtheorem{remark}{Remark}

\newcommand{\mint}[0]{\int_\Omega}
\newcommand{\tint}[0]{\int_0^t}
\newcommand{\Tint}[0]{\int_0^T}
\newcommand{\epsi}[0]{\varepsilon}
\newcommand{\eps}[0]{\sigma}
\newcommand{\ff}[0]{\varphi}
\newcommand{\fn}[0]{\ff_n}

\newcommand{\mn}[0]{\mu_n}
\newcommand{\barf}[0]{\bar{\varphi}}
\newcommand{\barm}[0]{\bar{\mu}}
\newcommand{\dert}[0]{\frac{d}{dt}}
\newcommand{\R}[0]{\mathbb{R}}
\newcommand{\rd}[0]{\mathbb{R}^d}
\newcommand{\frif}[0]{\bar{\varphi}^*}
\newcommand{\Ltv}[0]{L^2(0,T;V)}
\newcommand{\Lth}[0]{L^2(0,T;H)}
\newcommand{\tow}[0]{\rightharpoonup}
\newcommand{\EE}[0]{\mathcal{E}(\ff(t))}
\newcommand{\la}[0]{\lambda}
\newcommand{\EEj}[0]{\mathcal{E}(\ff_j(t))}

\allowdisplaybreaks[4]
%

\title[Nonlocal Cahn-Hilliard with reaction terms] 
      {Convective nonlocal Cahn-Hilliard equations with reaction terms}

\author[Francesco Della Porta and Maurizio Grasselli]{}

\subjclass{Primary: 37L30, 45K05, 76T99, 80A32.}
 \keywords{Nonlocal interactions, Cahn-Hilliard equations, weak solutions, existence and uniqueness, absorbing sets, global attractors.}

 \email{dellaporta@maths.ox.ac.uk}
 \email{maurizio.grasselli@polimi.it}


\begin{document}
\maketitle

\centerline{\scshape Francesco Della Porta }
\medskip
{\footnotesize
 \centerline{Mathematical Institute, University of Oxford}
   \centerline{Oxford OX2 6GG, UK}
} 

\medskip

\centerline{\scshape Maurizio Grasselli}
\medskip
{\footnotesize
 \centerline{Dipartimento di Matematica, Politecnico di Milano}
   \centerline{Milano 20133, Italy}
}

\bigskip


\begin{abstract}
We introduce and analyze the nonlocal variants of two Cahn-Hilliard type equations with reaction terms. The first one is the so-called Cahn-Hilliard-Oono equation which models, for instance, pattern formation in diblock-copolymers as well as in binary alloys with induced reaction and type-I superconductors. The second one is the Cahn-Hilliard type equation introduced by Bertozzi et al. to describe image inpainting. Here we take a free energy functional which accounts for nonlocal interactions. Our choice is motivated by the work of Giacomin and Lebowitz who showed that the rigorous physical derivation of the Cahn-Hilliard equation leads to consider nonlocal functionals.
The equations also have a transport term with a given velocity field and are subject to a homogenous Neumann boundary condition for the chemical potential, i.e., the first variation of the free energy functional. We first establish the well-posedness of the corresponding initial and boundary value problems in a weak setting. Then we consider such problems as dynamical systems and we show that they have bounded absorbing sets and global attractors.
\end{abstract}

\noindent\\
\section{Introduction}

Consider an incompressible isothermal and isotropic binary solution of constant molar volume which is initially homogeneous. If the temperature is rapidly quenched under a critical value $T_c$, the two solutes, A and B, begin to separate from each other and the domain is split in A-rich and B-rich subdomains. The early stage of this process in which the solution is called spinodal decomposition.
In order to descrive this phenomenon, a  well-known mathematical approach was introduced by J.W. Cahn and J.E. Hilliard (see~\cite{CH 58}) who introduced an appropriate free energy functional. More precisely, given a bounded domain $\Omega\subset \R^d$ with $d\leq 3$ occupied by a binary mixture of components A and B with a mass fraction of $\ff_A(x)$ and $\ff_B(x)$, respectively, setting $\ff:=\ff_A-\ff_B$, the free energy functional
is given by
\begin{equation}
\label{cahnH}
 E(\ff):=\mint \Bigl( \frac{\epsi^2}{2}|\nabla\ff|^2+F(\ff)\Bigr) dx.
\end{equation}

\noindent The term $\frac{\epsi^2}{2}|\nabla\ff|^2$ is a surface tension term
and $\epsi$ is proportional to the thickness of the diffused interface.
$F$ is a double well potential energy density which favors the separation of phases, that is, the formation of A-rich and B-rich regions. Although the physically relevant potential $F$ is logarithmic (see \cite[(3.1)]{CH 58}), in the literature it is usually approximated, for instance, by the following polynomial function
\begin{equation}
\label{dwp poli}
F(s)=\frac{1}{4}(s^2-1)^2.
\end{equation}
\noindent
Once an energy is defined, the spinodal decomposition can be viewed as an energy minimizing process.
Thus, the evolution of the phenomenon can be described as a gradient flow (cf.~\cite{C 61}, see also~\cite{Fife,CHRassegna3})
\[
\ff_t - \nabla\cdot \bigl( M(\ff)\nabla \mu_L \bigr)=0 ,\qquad\text{in } \Omega\times (0,T),
\]
on a given time interval $(0,T)$, $T>0$, where $M(\ff)$ is called mobility and the chemical potential $\mu_L$
is the first variation of $E$, that is,
\[
\mu_L:=\frac{\delta E}{\delta \ff}=F'(\ff)-\epsi^2\Delta\ff.
\]
\noindent
This is the well-known Cahn-Hilliard equation which has been widely studied by many authors. Here we just mention some pioneering works \cite{CHAnalysis3, CHAnalysis1, CHAnalysis2, Z 86} for the case with constant mobility, while for nonconstant (and degenerate at pure phases) mobility the basic reference
is~\cite{EG 96} (cf. also~\cite{Schi07} for nondegenerate mobility). For further references see, for instance, \cite{CHRassegna1,CHRassegna2,CHRassegna3}.

However, the purely phenomenological derivation of the Cahn-Hilliard equation is somewhat unsatisfactory from the physical viewpoint. This led Giacomin and Lebowitz to study the problem of phase separation from the microscopic viewpoint using a statistical mechanics approach (see~\cite{GL 3} and also~\cite{GL 1}). Performing the hydrodynamic limit they deduced a continuum model which is a nonlocal version of the Cahn-Hilliard equation, namely, they found the following
\[
\ff_t=\nabla\cdot\bigl(M(\ff)  \nabla\mu
\bigr)\bigr),\qquad\text{in } \Omega\times (0,T),
\]
where $\mu$ is defined by
\begin{equation}
\label{chempot}
\mu:=\frac{\delta \mathcal{E}}{\delta\ff}=a(x)\ff+F'(\ff)-J\ast\ff,\qquad\text{in } \Omega\times (0,T),
\end{equation}
where $J: \mathbb{R}^d \to \mathbb{R}$ is a convolution kernel such that $J(x)=J(-x)$ and $a(x):=\mint J(x-y)dy$.
In this case, the free energy functional $\mathcal{E}$ is given by
\[
\mathcal{E}(\ff)=\frac{1}{4}\int_\Omega\mint J(x-y)(\ff(x)-\ff(y))^2dxdy  + \eta\mint F(\ff(x))dx
\]

It is worth observing that $E(\ff)$ can be recovered from $\mathcal{E}(\ff)$ using Taylor expansion for the term $(\ff(x)-\ff(y))$ and supposing that $J$ is sufficiently peaked around zero. Thus the original Cahn-Hilliard equation can be seen as an approximation of its local version. Furthermore, as discussed in~\cite{GL 2}, the corresponding sharp interface model is the same as the local Cahn-Hilliard equation.

The nonlocal Cahn-Hilliard equation with degenerate mobility and logarithmic potential was first studied rigorously in \cite{GL 2}. Then more general results were proven in \cite{NLCHAnalysis1}. The longtime behavior of single solutions was studied in \cite{NLCHAA1, NLCHAA2}, while the global longterm dynamics (say, existence of global and exponential attractors) was investigated in \cite{NLCHExponential}. The case of constant mobility and smooth potential was treated in~\cite{CH forte, CH forte D}.

More recently, some works have been devoted to study the nonlocal Cahn-Hilliard equation coupled with the Navier-Stokes system (see~\cite{grass, grass trattore, FG2}). This is a nonlocal variant of a well-known diffuse interface model for phase separation in two-phase incompressible and isothermal fluids (model H). All such contributions produced, as by-products, further results on the nonlocal Cahn-Hilliard equation.

Taking advantage of some of these results, here we want to introduce and study two nonlocal Cahn-Hilliard equations with reaction terms which have already been proposed and analyzed in the local case: the Cahn-Hilliard-Oono (CHO) equation and the Cahn-Hilliard-Bertozzi-Esedo\={g}lu-Gillette (CHBEG) equation (see also \cite{CHRBio,miranville2} for other types of reaction terms).

Let us introduce the former first.

In~\cite{Glotzer} the authors introduce a reaction between the molecules or the polymers in the mixture in order to stabilize and control the length of the patterns during spinodal decomposition.
Let us suppose that the mixture consists of molecules or polymers of type A and B and that a chemical reaction $A\xrightleftharpoons[\Gamma_2]{\Gamma_1} B$ is induced during the spinodal decomposition. Here $\Gamma_1$ and $\Gamma_2$ are, respectively, the temperature dependent forward and backward reaction rates. By defining $\ff$ as the difference between the concentrations, the following Cahn-Hilliard type equation is proposed (see~\cite{Glotzer})
\begin{equation}
\label{LCHO}
\ff_{t}=\Delta\mu_L - (\Gamma_1+\Gamma_2)\Bigl(\ff-\frac{\Gamma_1-\Gamma_2}{\Gamma_1+\Gamma_2}\Bigr).
\end{equation}
This equation is known as the CHO equation and was introduced in~\cite{BO 90} (cf. \cite{OP 87}, however) for phase separation in diblock copolymers (see also \cite{OK 86}). It can be viewed as a gradient flow generated by the energy functional:
\[
\mathcal{H}(\ff)=\frac{\epsi^2}{2}\mint|\nabla\ff|+\mint F(\ff) + \frac{\sigma}{2}\mint\mint G(x,y) (\ff(x)-m)(\ff(y)-m),
\]
where $m\in\mathbb{R}$, $\sigma$ and $\epsi$ are two positive constants and $G$ is the Green function of $-\Delta$ with either periodic or no-flux boundary conditions, i.e.,
\[
-\Delta G(x,y)=\delta(x-y) ,\qquad\text{in } \Omega.
 \]
This energy differs from the one in \eqref{cahnH} just in the last term, which is nonlocal, and prefers rapid oscillations between the two polymers (see~\cite{VG 10} for the one-dimensional case).

It has been observed that far from the domain border, pattern are not influenced by the boundary conditions. The most commonly used ones are either periodic or no-flux.
The latter can be mathematically expressed as follows:
\[
\frac{\partial \ff}{\partial \textbf{n}}=0 \quad \text{and}\quad \frac{\partial\mu_L}{\partial \textbf{n}}=0, \qquad\text{on } \partial\Omega\times (0,T).
\]
If we integrate equation \eqref{LCHO} on the domain $\Omega$
with such boundary condition we have
\begin{equation}
\label{mass perde intro}
\dert\barf = -\sigma(\barf-m),
\end{equation}
where $\barf:=\frac{1}{|\Omega|}\mint\ff.$ This equation is a linear ODE with solution $\barf(t)=\barf(0)e^{-\sigma t}+ m$.
Note that, when $\sigma>0$, then $\barf(0)=m$ implies the conservation of the total mass is conserved. This happens
in phase separation of diblock copolymers. Instead, if $\barf(0)\neq m$ then $\barf(t)$ converges to $m$ exponentially fast.

More information and results about the CHO equation and its application can be found in \cite{BO 90, choksi peletier, choksi 01, CMW11, CR 03, Glotzer, Lodge, nanolit, muratov 10, Muratov 02, NO 95, OK 86, antireflex} and references therein
(see also ~\cite{reazione cinese, reazione cinese2} for hydrodynamic effects). It is worth noting that the pattern formation process differs from the one associated with the classical Cahn-Hilliard equation (see \cite{AKW13}). As far as well-posedness and global longtime behavior (i.e., existence of global and exponential attractors) are concerned, we refer to~\cite{miranville} (cf. also \cite{BGM13}).

The CHBEG equation can be viewed as a generalized version of the CHO equation. It has been introduced by Bertozzi et al. to model image inpainting, i.e., the restoration process of damaged images and \textit{super-resolution} (see~\cite{bertozzi2,bertozzi1}) with regard to binary images or images with high contrast. As in the case of phase separation, the double well potential tends to separate the mixture in pure phases, so it does with binary images by separating the two colors of the image without many shades.

The CHBEG equation reads as follows
\[
\ff_t=\Delta(-\epsi^2\Delta\ff+F'(\ff))-\lambda(x)(\ff - h(x)),\qquad\text{in } \Omega\times (0,T).
\]
Here $\Omega$ is a two-dimensional domain representing the whole image, with no-flux boundary conditions. The function $F$ is still a double well potential, with two minima $\ff_1$ and $\ff_2$, that is, the two colors of the image. The function $h$ represents the image that needs to be restored ($h(x)=\ff_i(x)$ where the color is $\ff_i$, $i=1,2$. The function $\lambda$ is a penalization parameter which forces the restored image $\ff$ to be equal to $h$ outside the damaged region. The numerical simulations carried out in~\cite{bertozzi2} show that the implemented method is faster then its main competitors and it gives satisfactory results.

Existence and uniqueness of a weak solution to the Cahn-Hilliard equation for the CHBEG equation have been proven in~\cite{bertozzi1}, while the longtime behavior of its solutions has recently been analyzed in~\cite{miranville pre}.

On account of what we have observed concerning the derivation of the nonlocal Cahn-Hilliard equation, it seems natural to study the nonlocal variants of CHO and CHBEG equations.

More precisely, we consider first a nonlocal CHO equation with a transport term which accounts for a possible flow of the mixture at a certain given (averaged) velocity field $u:\Omega\times(0,T)\to\rd$, that is
\begin{equation}
\label{CHO}
\ff_t + \sigma( \ff - \frif) + \nabla\cdot(u \ff)= \Delta \mu + g,\qquad\text{in } \Omega\times (0,T).
\end{equation}
where $\mu$ is given by  \eqref{chempot} and  $g$ is a given external source.
Here we have set all the constants equal to one but $m$ which has now been denoted by $\frif$
on account of  \eqref{mass perde intro}.

Then, in the same spirit, we introduce the nonlocal version of the CHBEG equation, that is,
\begin{equation}
\label{CHBEG}
\ff_t +\lambda(x)(\ff-h(x) )+\nabla\cdot(u\ff)=\Delta\mu + g, \qquad\text{in } \Omega\times(0,T).
\end{equation}
As for the CHO equation, we include a transport term which can be used to improve the inpaint of the image and an external source.
Therefore we consider the following initial and boundary value problems
\begin{equation}
\label{problema}
\begin{cases}
\displaystyle\ff_t+  \nabla \cdot (u \ff) +  \eps (\ff-\frif) =
\Delta \mu +g &\text{in } \Omega\times(0,T)\\[1.4ex]
\displaystyle\mu=a\ff-J\ast\ff+ F'(\ff) &\text{in } \Omega\times(0,T)\\[1.4ex]
\displaystyle\frac{\partial\mu}{\partial n}=0 &\text{on } \partial\Omega\times (0,T) \\[1.4ex]
\displaystyle \ff(0)=\ff_0 &\text{in } \Omega
\end{cases}
\end{equation}
and
\begin{equation}
\label{CHB}
\begin{cases}
\displaystyle\ff_t +\lambda(x)(\ff-h(x) )+\nabla\cdot(u\ff)=\Delta\mu + g
&\text{in } \Omega\times(0,T)\\[1.4ex]
\displaystyle\mu=a\ff-J\ast\ff+ F'(\ff) &\text{in } \Omega\times(0,T)\\[1.4ex]
\displaystyle\frac{\partial\mu}{\partial n}=0 &\text{on }\partial\Omega\times (0,T) \\[1.4ex]
\displaystyle \ff(0)=\ff_0 &\text{in } \Omega.
\end{cases}
\end{equation}

The first goal will be to define a notion of weak solution to problems \eqref{problema} and \eqref{CHB} and to establish their well-posedness under suitable conditions. This will be done in Sections \ref{EUCHO} and \ref{EUCHBEG}. Then, in Sections \ref{GACHO} and \ref{GACHBEG}, we will analyze nonlocal CHO and CHBEG equations equipped with no-flux boundary condition as dynamical systems on suitable phase spaces. In particular, we will prove the existence of a (bounded) absorbing set as well as of the global attractor. The former result will require some care (especially for the CHBEG equation) to handle the bound on the spatial average of $\ff$ which is not conserved in general.

We also expect to prove further results. for instance, the existence of more regular solutions, the regularization of weak solutions and the existence of exponential attractors as shown in \cite{NLCHExponential} (see also \cite{FGG}). These issues will possibly be the subject of a forthcoming paper. A more challenging question appears to be the following: does a solution converge to a single equilibrium? This happens for the nonlocal Cahn-Hilliard equation (see, e.g., \cite{NLCHExponential} and references therein), but it is open even for equation \eqref{LCHO}. We finally mention that further results on well-posedness and regularity in the case of degenerate mobility and logarithmic potential have recently been proven in \cite{melrocca}.
That work has been motivated by the present contribution.

\section{Nonlocal CHO equation: Well-posedness of  \eqref{problema}}
\label{EUCHO}
\setcounter{equation}{0}
\noindent
Here we show that there exists a unique weak solution to problem \eqref{problema} under suitable assumptions on $u$, $g$, $J$ and $F$. We also prove that any weak solution satisfies an energy equality and it depends continuously with respect to the initial datum in a suitable norm. We adapt the strategy used in~\cite{grass}.

\subsection{Notation}
The space $L^2(\Omega)$ will be denoted $H$ and the space
$H^1(\Omega)$ will be denoted $V$. Furthermore we will set:
\[
V_2=\Bigl\{v \in H^2(\Omega):\, \frac{\partial v}{\partial
n}=0\qquad on\, \partial\Omega\Bigr\}.
\]
By $\Arrowvert\,\cdot\,\Arrowvert$ and $(\cdot\,,\cdot)$ will be denoted respectively the norm and the scalar product in $L^2(\Omega)$.

The linear operator $A=-\Delta: V_2 \to H$ is self-adjoint and non-negative. Moreover, it is strictly positive if restricted to $H_0=\{\psi\in H\,:\, \bar{\psi} =0\}$.
Then we will also set $\Arrowvert\, A^{-1/2}( \cdot)\,\Arrowvert $  and we observe that the norm $\|\cdot\|_\#$ defined as
\[
\|x\|_\#:=\Bigl(\|x-\bar{x}\|^2_{-1}+\bar{x}^2 \Bigr)^\frac{1}{2},
\]
is equivalent to the usual norm of $V'$.

Furthermore, $c$ will indicate a generic nonnegative constant depending on $J,\, F,\, \Omega$, $\sigma,\,\frif$, $g,\,\lambda,\, h$ and $u$ at most. If it is necessary to distinguish between such constants, then we will use $C_i$. Occasionally we will point out if such constants also depend on other quantities but time. If there is a further dependence on $T$ and/or the initial datum $\ff_0$, then we shall use $N$. The value of $c$ and $N$ may vary even within the same line.

\subsection{Assumptions and statements}
Let us introduce first the assumptions following~\cite{CH forte} and~\cite{grass} (cf. also~\cite{miranville}):

\begin{description}
	\item[(H0)]
		 $\Omega\subset\mathbb{R}^d$, $d\leq 3$, open, bounded and connected with a smooth boundary.
	\item[(H1)]
		$J\in W^{1,1}(\rd)$ is such that:
		\[
         J(x)=J(-x), \qquad a(x):=\mint J(x-y)\,dy \geq 0, \;\text{a.e.}\, x\in\Omega.
		\]
	\item[(H2)]
		$F\in C^{2,1}_{loc}(\R)$ and there exists $c_0 > 0$ such that
		\[
F''(s)+a(x)\geq c_0, \quad \forall s\in \R,\; \text{a.e.}\, x \in\Omega.
\]
	\item[(H3)]
There exist $ c_1>\frac{1}{2}\|J\|_{L^1(\rd)}$ and $c_2\in \R$
such that
		\[
		F(s)\geq c_1 s^2-c_2, \quad \forall s \in \R.
		\]
	\item[(H4)]
		There exist $c_3>0$ and $p\in (1,2]$ such that
		\[
		|F'(s)|^p\leq c_4(|F(s)|+1),\quad \forall s\in \R.
		\]
	\item[(H5)]
		$u\in L^2(0,T; (L^\infty(\Omega)\cap H^1_0(\Omega))^d)$.	
	\item[(H6)]
		$\eps$ is a given positive constant.
	\item[(H7)]
		$\frif\in\R$ is a given constant.
	\item[(H8)]
		$g\in L^2(0, T; V')$.
\end{description}


\begin{remark}
\label{perturbation}
Assumption (H2) implies that the potential $F$ is a quadratic
perturbation of a strictly convex function. Indeed $F$ can be
represented as
\begin{equation}
\label{F coerciva}
F(s)=G(s)-\frac{a^*} {2}\,s^2
\end{equation}
with $G\in C^{2,1}(\R)$ strictly convex, since $G''\geq c_0$ in
$\Omega$. Here $a^*= \| a\| _{L^{\infty}(\Omega)}$ and observe that $a\in L^\infty(\Omega)$ derives from (H1).
\end{remark}	

\begin{remark}
\label{growth}
Since $F$ is bounded from below, it is easy to see that (H4) implies that $F$ has polynomial growth of order $p'$, where $p'\in[2,\infty)$ is the conjugate index to p. Namely there exist $c_4>0$ and $c_5\geq 0$ such that
\[
|F(s)|\leq c_4|s|^{p'}+c_5,\qquad \forall s\in\R.
\]
\end{remark}	

\begin{remark}
The potential  \eqref{dwp poli} satisfies all the hypotheses on $F$.
\end{remark}	

\begin{remark}
\label{h4}
It is easy to show that (H4) implies
		\[
		|F'(s)|\leq c(|F(s)|+1),\quad \forall s\in \R;
		\]
furthermore (H3) implies that
		\[
		|F(s)|\leq F(s)+ 2\max\{0,c_2\},\quad \forall s\in \R.\\\\
		\]
\end{remark}

\begin{remark}
Take for simplicity $u=g=0$ in equation  \eqref{CHO} and observe that it can formally be rewritten as follows
\begin{equation*}
\ff_t = \nabla\cdot\bigl( (F''(\ff)+a)\nabla\ff\bigr)+\nabla\cdot\bigl(\nabla a\ff\bigr)-\nabla J\ast \ff +\eps(m-\ff)
\end{equation*}
from which the crucial role of (H2) is evident. This remark also holds for  \eqref{CHBEG}.
\end{remark}

We can now give our definition of weak solution to problem \eqref{problema}:
\begin{definition}
\label{soluzione debole}
Let $\ff_0\in H$ such that $F(\ff_0)\in L^1(\Omega)$ and $T>0$ be given.
Then $\ff$ is a weak solution to problem \eqref{problema} on
$[0,T]$ corresponding to $\ff_0$ if:
	\begin{align*}
	&\ff\in L^\infty(0,T;H)\cap L^2(0,T;V),\;
	\ff_t\in L^2(0,T;V'),\\
	&\mu=a\ff-J\ast\ff+ F'(\ff) \in \Ltv;
	\end{align*}
and, setting
	\begin{equation}
	\label{eq rho}
	\rho(x,\ff):=a(x)\ff+F'(\ff),
	\end{equation}
we have
	\begin{align}
	\label{eq}
&\langle\ff_t,\psi\rangle+(\nabla\rho,\nabla\psi)+(\eps(\ff-\frif),\psi)=(\nabla
J\ast\ff,\nabla \psi)+(u\ff,\nabla\psi)+\langle g, \psi\rangle;\\
	&\label{ci}
	\ff(0)=\ff_0,
	\end{align}
for every $\psi\in V$ and for almost any $t\in(0,T)$.
\end{definition}

\begin{remark}
Since $\rho=\mu+J\ast\ff$, we have $\rho\in L^2(0,T;V)$. Besides
$\ff\in C([0,T]; H)$.
\end{remark}

\begin{remark}
Observe that if we choose $\psi=1$ in \eqref{eq} and we set $f:=\bar{g} +\eps\frif$, then we obtain
\[
\dert \barf + \eps\barf = f.
\]
Thus the total mass is not conserved in general.
\end{remark}

Existence and uniqueness for problem \eqref{problema} is given by:
\begin{theorem}
\label{buona posizione}
Let $\ff_0\in H$ such that $F(\ff_0)\in L^1(\Omega)$ and suppose(H0)-(H8) are satisfied. Then, for every $ T>0$ there exists a
unique weak solution $\ff$ to problem  \eqref{problema} on
$[0,T]$ corresponding to $\ff_0$.
Furthermore, $F(\ff)\in L^\infty(0,T;L^1(\Omega))$ and setting
\begin{equation}
\label{nlEnergy}
\mathcal{E}(\ff(t))=\frac{1}{4}\mint\mint J
(x-y)(\ff(x,t)-\ff(y,t))^2 \,dx\,dy + \mint F(\ff(x,t))\,dx.
\end{equation}
the following energy equality holds for almost every $t\in(0,T)$:
\begin{equation}
\label{energy equality}
\dert \mathcal{E}(\ff(t)) + \|\nabla\mu\|^2 + \eps(\ff-\frif,\mu) = (u\ff,\nabla\mu)+\langle g ,\mu\rangle.
\end{equation}
\end{theorem}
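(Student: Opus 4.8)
The plan is to prove existence by a Faedo--Galerkin scheme, derive uniform a priori bounds in two steps, pass to the limit, and then obtain uniqueness through an estimate in a negative norm. First I would approximate using the eigenfunctions $\{w_j\}$ of the Neumann operator $A=-\Delta$, which form an orthonormal basis of $H$ and an orthogonal one of $V$. Looking for $\ff_n=\sum_{j=1}^n c_j^n(t)w_j$ solving the projection of \eqref{eq}, one obtains a system of ODEs whose nonlinearity is locally Lipschitz because $F\in C^{2,1}_{loc}(\R)$; hence a unique local solution exists, and the bounds below make it global on $[0,T]$. The first estimate comes from testing with $\ff_n$: by \eqref{eq rho} and (H2) one has $(\nabla\rho_n,\nabla\ff_n)\geq c_0\|\nabla\ff_n\|^2-c\|\ff_n\|\,\|\nabla\ff_n\|$, while the convolution, transport and source terms are absorbed by Young's inequality, using $\|u\|_{L^\infty}^2\in L^1(0,T)$ by (H5) and $\|g\|_{V'}\in L^2(0,T)$ by (H8). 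A Gronwall argument then bounds $\ff_n$ in $L^\infty(0,T;H)\cap\Ltv$, uniformly in $n$.

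The second, crucial estimate is obtained by testing with the discrete chemical potential $\mu_n=a\ff_n-J\ast\ff_n+F'(\ff_n)$, which yields the Galerkin analogue of \eqref{energy equality}, since $\langle\ff_{n,t},\mu_n\rangle=\dert\mathcal{E}(\ff_n)$ and $(\nabla\rho_n,\nabla\mu_n)-(\nabla J\ast\ff_n,\nabla\mu_n)=\|\nabla\mu_n\|^2$. Here (H3) gives $\mathcal{E}(\ff_n)\geq c\|\ff_n\|^2-c$ and $\int_\Omega|F(\ff_n)|\leq\mathcal{E}(\ff_n)+c\|\ff_n\|^2$, so by Remark~\ref{h4} both $\int_\Omega|F'(\ff_n)|$ and $|\barm_n|$ are bounded by $c(\mathcal{E}(\ff_n)+1)$. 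The point is that, the $L^\infty(0,T;H)$ bound on $\ff_n$ being already available, the terms $\eps(\ff_n-\frif,\mu_n)$, $(u\ff_n,\nabla\mu_n)$ and $\langle g,\mu_n\rangle$ can be controlled---after splitting $\mu_n=\barm_n+(\mu_n-\barm_n)$ and using Poincaré's inequality---by $\tfrac12\|\nabla\mu_n\|^2+b(t)\,(\mathcal{E}(\ff_n)+1)$ with $b\in L^1(0,T)$; without that bound one would only obtain a superlinear $(\mathcal{E}+1)^{3/2}$ term, which would not close. Gronwall's lemma then bounds $\mathcal{E}(\ff_n(t))$ on $[0,T]$, giving $F(\ff_n)$ bounded in $L^\infty(0,T;L^1(\Omega))$, $\nabla\mu_n$ in $\Lth$, hence $\mu_n$ in $\Ltv$, and finally $\ff_{n,t}$ in $L^2(0,T;V')$ by comparison.

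With these uniform bounds the Aubin--Lions lemma provides a subsequence with $\ff_n\to\ff$ strongly in $\Lth$ and a.e.\ in $\Omega\times(0,T)$, $\ff_n\tow\ff$ in $\Ltv$, $\ff_{n,t}\tow\ff_t$ in $L^2(0,T;V')$ and $\mu_n\tow\mu$ in $\Ltv$. I expect the main difficulty to be the passage to the limit in the nonlinear potential together with the upgrade from an energy inequality to the energy equality. For the former I would use the form $\langle\ff_{n,t},\psi\rangle+(\nabla\mu_n,\nabla\psi)+\eps(\ff_n-\frif,\psi)=(u\ff_n,\nabla\psi)+\langle g,\psi\rangle$; since $F'(\ff_n)=\mu_n-a\ff_n+J\ast\ff_n$ is bounded in $\Lth$ and converges a.e.\ to $F'(\ff)$, uniform integrability identifies its weak limit, so $\mu=a\ff-J\ast\ff+F'(\ff)$, while the remaining terms pass by weak/strong convergence and the initial condition \eqref{ci} follows from $\ff\in C([0,T];H)$. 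For the latter, once existence is secured one has $\mu\in\Ltv$ and $\ff_t\in L^2(0,T;V')$, so $\langle\ff_t,\mu\rangle$ is well defined and a chain-rule lemma for $t\mapsto\EE$ turns the lower-semicontinuity inequality into the equality \eqref{energy equality}.

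Finally, for uniqueness I would take two solutions with the same datum, set $\ff=\ff_1-\ff_2$ and $\mu=\mu_1-\mu_2$, and note that $\psi=1$ gives $\dert\barf+\eps\barf=0$ with $\barf(0)=0$, hence $\barf\equiv0$ and $A^{-1}\ff$ is well defined. Testing with $A^{-1}\ff$ turns the elliptic term into $(\mu,\ff)$, and (H2) yields $(a\ff+F'(\ff_1)-F'(\ff_2),\ff)\geq c_0\|\ff\|^2$. The only term not manifestly subordinate, $(J\ast\ff,\ff)$, I would rewrite as $(\nabla J\ast\ff,\nabla A^{-1}\ff)\leq\|\nabla J\|_{L^1}\|\ff\|\,\|A^{-1/2}\ff\|$ and absorb into $c_0\|\ff\|^2$ up to a multiple of $\|A^{-1/2}\ff\|^2$; the transport term $(u\ff,\nabla A^{-1}\ff)$ is handled the same way. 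A Gronwall inequality for $\|A^{-1/2}\ff\|^2$, with the $L^1(0,T)$ coefficient $\|u\|_{L^\infty}^2$, then forces $\ff\equiv0$, and the same computation performed without assuming equal data gives continuous dependence on $\ff_0$ in the norm $\|\cdot\|_\#$.
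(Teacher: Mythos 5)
Your strategy is essentially the paper's: Faedo--Galerkin, an energy estimate obtained by testing with the discrete chemical potential, identification of the nonlinearity through a.e.\ convergence plus a uniform bound, a chain rule for $t\mapsto\EE$ to get \eqref{energy equality}, and uniqueness via an $H^{-1}$-type estimate after observing that the difference of the means vanishes. The preliminary estimate you obtain by testing with $\fn$ is not needed in the paper --- coercivity (H3), with $c_1>\tfrac12\|J\|_{L^1}$, already makes $\mathcal{E}(\fn)$ control $\|\fn\|^2$, and the convexity of $F(s)+\tfrac{a^*}{2}s^2$ disposes of $\eps(\fn,\mn)$ with a favourable sign --- but it is harmless and makes your Gronwall step linear, as you intend.

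There is, however, one step that fails as written: the treatment of the initial datum. Your Gronwall argument for the energy estimate needs $\mathcal{E}(\fn(0))=\mathcal{E}(P_n\ff_0)$ bounded uniformly in $n$, i.e.\ $\mint F(P_n\ff_0)\leq C$. For $\ff_0\in H$ with only $F(\ff_0)\in L^1(\Omega)$ this is not automatic: by Remark~\ref{growth} the potential may grow like $|s|^{p'}$ with $p'>2$, and the spectral projections $P_n$ are not uniformly bounded on $L^{p'}(\Omega)$, so $\mint F(P_n\ff_0)$ can blow up even though $P_n\ff_0\to\ff_0$ in $H$. The paper circumvents this by first proving existence for $\ff_0\in V_2$ (where $P_n\ff_0\to\ff_0$ in $V_2\hookrightarrow L^\infty(\Omega)$) and then recovering the general datum by a separate density/approximation argument; your proposal needs the same two-tier structure. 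Two smaller points. First, the unprojected $\mn=a\fn-J\ast\fn+F'(\fn)$ does not belong to $\Psi_n$ and so cannot be used as a test function; you must take $\mn:=P_n\bigl(a\fn+F'(\fn)-J\ast\fn\bigr)$, for which your two identities $\langle\fn',\mn\rangle=\dert\mathcal{E}(\fn)$ and $(\nabla(\rho_n-J\ast\fn),\nabla\mn)=\|\nabla\mn\|^2$ do remain valid on the eigenfunction basis (the discarded component is $H$-orthogonal to $A\mn\in\Psi_n$). Second, the claim that $F'(\fn)$ is bounded in $\Lth$ is not justified under (H4), which only yields an $L^\infty(0,T;L^p(\Omega))$ bound with $p\in(1,2]$; this weaker bound still suffices to identify the weak limit via a.e.\ convergence in a reflexive $L^p$, which is exactly how the paper proceeds.
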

\noindent
Furthermore, we have:
\begin{proposition}
\label{stability}
Let the hypotheses (H0)-(H8) hold. Then the weak solution to \eqref{problema} continuously depend on $\ff_0$
with respect to the $\|\cdot\|_\#$-norm.
\end{proposition}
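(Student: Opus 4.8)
The plan is to estimate the difference of two weak solutions in the $\|\cdot\|_\#$-norm by a Gronwall argument whose engine is the quadratic-perturbation structure of $F$ encoded in (H2). Let $\ff_1,\ff_2$ be the weak solutions issuing from $\ff_{0,1},\ff_{0,2}$ with the same data $u,g,\eps,\frif$, set $\ff:=\ff_1-\ff_2$ and $\rho_{12}:=a\ff+F'(\ff_1)-F'(\ff_2)$. Subtracting the two copies of \eqref{eq}, the source $g$ and the constant $\eps\frif$ cancel, leaving, for every $\psi\in V$,
\[
\langle\ff_t,\psi\rangle+(\nabla\rho_{12},\nabla\psi)+\eps(\ff,\psi)=(\nabla J\ast\ff,\nabla\psi)+(u\ff,\nabla\psi).
\]
Testing with $\psi=1$ (and using $u\in H^1_0(\Omega)^d$ to kill the transport contribution) gives $\dert\barf+\eps\barf=0$, hence $\barf(t)=\barf(0)e^{-\eps t}$ and $|\barf(t)|\le|\barf(0)|\le\|\ff_0\|_\#$. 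This already controls the second component of the $\#$-norm and shows that the non-conserved average is harmless.

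For the zero-mean part $w:=\ff-\barf$ I would test with $\psi=A^{-1}w\in V_2$. Since $w$ and $w_t$ have zero average and $\int_\Omega A^{-1}w=0$, the time term collapses to $\tfrac12\dert\|w\|_{-1}^2$, while $\eps(\ff,A^{-1}w)=\eps\|w\|_{-1}^2\ge0$ can be kept on the left. The decisive step is the potential term: using $(\nabla\rho_{12},\nabla A^{-1}w)=(\rho_{12},w)$, writing $w=\ff-\barf$, and applying the mean value theorem together with (H2), one gets $(\rho_{12},\ff)=\mint(a+F''(\zeta))\ff^2\ge c_0\|\ff\|^2$, with $\zeta=\zeta(x)$ between $\ff_1(x)$ and $\ff_2(x)$. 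This coercivity is exactly what (H2) provides. The residual cross term $-\barf(\rho_{12},1)$ splits into $-\barf(a,\ff)$, bounded by $\tfrac{c_0}{8}\|\ff\|^2+c\,\barf^2$, and $-\barf\mint(F'(\ff_1)-F'(\ff_2))$; for the latter I would invoke Remark \ref{h4} and $F(\ff_i)\in L^\infty(0,T;L^1(\Omega))$ to obtain $|\mint(F'(\ff_1)-F'(\ff_2))|\le N$, so this term is $\le N|\barf(0)|$.

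The two remaining right-hand terms are routine. By Young's convolution inequality $\|\nabla J\ast\ff\|\le\|\nabla J\|_{L^1(\rd)}\|\ff\|$ and $\|\nabla A^{-1}w\|=\|w\|_{-1}$, whence $|(\nabla J\ast\ff,\nabla A^{-1}w)|\le\tfrac{c_0}{8}\|\ff\|^2+c\|w\|_{-1}^2$; likewise $|(u\ff,\nabla A^{-1}w)|\le\|u\|_{L^\infty}\|\ff\|\|w\|_{-1}\le\tfrac{c_0}{8}\|\ff\|^2+c\|u\|_{L^\infty}^2\|w\|_{-1}^2$. Absorbing the $\|\ff\|^2$-contributions into $c_0\|\ff\|^2$ leaves
\[
\dert\|w\|_{-1}^2\le c\bigl(1+\|u\|_{L^\infty}^2\bigr)\|w\|_{-1}^2+c\,\barf(0)^2+N|\barf(0)|.
\]
Since $\|u\|_{L^\infty}^2\in L^1(0,T)$ by (H5), Gronwall's lemma yields $\|w(t)\|_{-1}^2\le C(T)\bigl(\|w(0)\|_{-1}^2+\|\ff_0\|_\#^2+N\|\ff_0\|_\#\bigr)$, and adding $\barf(t)^2\le\barf(0)^2$ gives $\|\ff(t)\|_\#^2\le C(T)\bigl(\|\ff_0\|_\#^2+N\|\ff_0\|_\#\bigr)$, i.e. the asserted continuous dependence.

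I expect the main obstacle to be precisely the cross term coupling the non-conserved average $\barf$ with the potential difference $F'(\ff_1)-F'(\ff_2)$: with only the weak-solution regularity $\ff_i\in L^\infty(0,T;H)\cap L^2(0,T;V)$ one cannot bound $F'(\ff_1)-F'(\ff_2)$ pointwise by $|\ff|$, and one must instead exploit the $L^1$-control furnished by (H4) and $F(\ff_i)\in L^\infty(0,T;L^1(\Omega))$. This is exactly why the statement is formulated in the weak $\#$-norm, which separately tracks the fluctuation in $V'$ and the scalar average, rather than as a Lipschitz estimate.
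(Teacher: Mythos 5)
Your proposal is correct and follows essentially the same route as the paper: split off the average via $\psi=1$ (giving $\dert\barf+\eps\barf=0$, hence $|\barf(t)|\le|\barf_0|$), test the zero-mean part with $A^{-1}(\ff-\barf)$, use (H2) for the coercive term $(\rho_1-\rho_2,\ff)\ge c_0\|\ff\|^2$, control the cross term $\barf\,(\rho_1-\rho_2,1)$ via Remark~\ref{h4} and the $L^\infty(0,T;L^1)$ bound on $F(\ff_i)$, and close with Gronwall to get an estimate of the form $\|\ff(t)\|_\#^2\le C(T)\bigl(\|\ff_0\|_\#^2+N|\barf_0|\bigr)$. The only cosmetic difference is that you bound the convolution term before integrating by parts, whereas the paper estimates $(J\ast\ff,\ff-\barf)$ directly; the outcome is the same.
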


\subsection{Proof of Theorem~\ref{buona posizione} and of Proposition~\ref{stability}}
This section is divided into three parts. The first part is devoted to the existence of a weak solution. The energy identity \eqref{energy equality} is shown in the second one and the continuous dependence estimate is deduced in the third.

\subsubsection{Existence of a solution}
Existence of a weak solution is carried out following closely~\cite{grass}. We will first prove the existence of the solution when $\ff_0\in V_2$.
Then we will recover the existence of a solution when $\ff_0\in H$ such that $F(\ff_0)\in L^1(\Omega)$ by a density argument. We also assume
$u\in C([0,T]; (L^\infty(\Omega)\cap H^1_0(\Omega))^d)$ and $g\in C([0,T]; V')$ for the moment.

As usual in the Faedo-Galerkin method, we consider the family $\{\psi_j\}_{j}\subset V_2$ of the
eigenvectors of the self-adjoint, positive, linear operator $A+I: V_2 \to H$.
Let us define the n-dimensional subspace
$\Psi_n:=\langle\psi_1,...,\psi_n\rangle$ and consider the
orthogonal projector on this subspace $P_n:=P_{\Psi_n}$.
We then look for two functions of the following form:
\[
\ff_n(t)=\sum_{k=1}^{n} b_k^{(n)}(t)\psi_k, \qquad
\mu_n(t)=\sum_{k=1}^{n} c_k^{(n)}(t)\psi_k
\]

\noindent
that solve the following discretized problem
\begin{align}
\label{eq approssimata}
&(\ff_n',\psi)+(\nabla\rho_n,\nabla\psi)+(\eps(\ff_n-\frif),\psi)
=(u\ff_n,\nabla\psi)+(\nabla
J\ast\ff_n,\nabla\psi) + \langle g, \psi\rangle\\
\label{defrho}
&\rho_n:=a(\cdot)\ff_n+F'(\ff_n),\\
\label{apprmu}
&\mu_n=P_n(\rho_n-J\ast\ff_n),\\
\label{ci approssimata}
&\ff_n(0)=\ff_{0n},
\end{align}
\noindent
for every $\psi\in\Psi_n$, and where $\ff_{0n}:=P_n\ff_0$.

By using the definition of $\ff_n$, problem  \eqref{eq approssimata}-\eqref{ci approssimata} becomes equivalent to a Cauchy problem for a system of ordinary
differential equations in the $n$ unknowns $b_i^{(n)}$.
Thanks to (H2), the Cauchy-Lipschitz theorem yields that there exists a unique solution $b^{(n)}\in C^1([0,T^*_n];\R^n)$
for some maximal time $T^*_n\in(0,+\infty]$.

Let us show that $T^*_n=+\infty$, for all $n\geq 1$. Indeed, taking $\psi=\mu_n$ as test function in \eqref{eq approssimata}
we get the following identity:
\[
(\ff_n',\mu_n)+(\nabla\rho_n,\nabla\mu_n)+(\eps(\ff_n -\frif ),\mu_n)
=(u\ff_n,\nabla\mu_n)+(\nabla
J\ast\ff_n,\nabla\mu_n) + \langle g,\mn\rangle,
\]
which can be rewritten as follows
\begin{align}
\nonumber
&\frac{d}{dt}\Bigl( \frac{1}{4}\mint\mint
J(x-y)(\ff_n(x)-\ff_n(y))^2+\mint F(\ff_n) \Bigr)+
\|\nabla\mu_n\|^2
+(\eps(\ff_n-\frif),\mu_n)\\
\label{stima approssimata}
&=(\nabla J\ast\ff_n,\nabla\mu_n)-(\nabla(P_n(J\ast\ff_n)),\nabla\mu_n)+(u\ff_n,\nabla\mu_n) + \langle g, \mu_n\rangle.
\end{align}

\noindent
It is easy to see that
\begin{equation}
\label{stima2}
(\nabla \mu_n,\nabla P_n(J\ast\ff_n)) \leq
\frac{1}{4}\|\nabla\mu_n\|^2+\|\ff_n\|^2\|J\|_{W^{1,1}}^2
\end{equation}
and
\[
(\nabla J\ast\ff_n,\nabla\mu_n)\leq
\frac{1}{4}\|\nabla\mu_n\|^2+\|\ff_n\|^2\|J\|_{W^{1,1}}^2.
\]

\noindent
By means of (H3), we can deduce the existence of a positive constant $\alpha$ such that $\alpha<2c_1-\|J\|_{L^1(\Omega)}$.
Therefore we have
\begin{align}
\nonumber
&\frac{1}{2}\mint\mint J(x-y) (\ff_n(x)-\ff_n(y))^2\,dx\,dy
+2\mint F(\ff_n)
\\
\label{stima5}
&=\|a\ff_n\|^2 + 2\mint F(\ff_n) - (\ff_n, J\ast \ff_n)
\geq \alpha\Bigl( \| \ff_n\|^2 + \mint F(\fn)\Bigr) - c.
\end{align}

\noindent
Thanks to Remark~\ref{h4} and to the identity
$$
(P_n(-J\ast\fn+a\fn),1)=(-J\ast\fn+a\fn,1)=0
$$
we get
\begin{equation}
\label{mn minore di F}
\biggl | \mint\mn \biggr|
= \bigl|(F'(\fn),1)\bigr|
\leq \mint \bigl|F'(\fn)\bigr|
\leq c\mint F(\fn)+c.
\end{equation}
\noindent
Thus we obtain
\begin{equation}
\label{stima forzante}
(\eps \frif,\mn)\leq \eps\frif (1,\mn) \leq c\eps |\frif| \Bigl( \mint F(\fn)+1\Bigr).
\end{equation}

\noindent
By using (H5), it is easy to show that
\begin{equation}
\label{stima u}
(u\ff_n,\nabla\mu_n)\leq
\|u\|_{L^\infty}\|\nabla\mu_n\|\|\ff_n\|\leq
\frac{1}{8}\|\nabla\mu_n\|^2+2\|u\|_{L^\infty}^2\|\ff_n\|^2.
\end{equation}
On account of (H8) and using the Poincar\'e-Wirtinger inequality, we have
\begin{equation}
\langle g, \mn - \bar{\mu}_n\rangle \leq \|g\|_{V'}\|\mn-\bar{\mu}_n\|_V\leq (1+C_p) \|\nabla\mn\|\|g\|_{V'}
\label{forzante mumedia}
\leq c\|g\|_{V'}^2+\frac{1}{8}\|\nabla\mn\|^2.
\end{equation}
Besides, from \eqref{mn minore di F} we get that
\begin{equation}
\label{forzante mumedia2}
\langle g, \bar{\mu}_n\rangle\leq  c\Bigl(\mint F(\fn)+1\Bigr)\|g\|_{V'}.
\end{equation}
Collecting \eqref{forzante mumedia} and \eqref{forzante mumedia2} we deduce
\begin{equation}
\label{forzante h}
\langle g, \mn\rangle
\leq c\|g\|_{V'}^2+\frac{1}{8}\|\nabla\mn\|^2 +  c\|g\|_{V'}\mint F(\fn)+c.
\end{equation}

On the other hand, thanks to (H6), we have
\[
(\eps\ff_n,\mu_n)
=(\eps\ff_n, a\ff_n+F'(\ff_n)- J \ast
\ff_n)
\geq (\eps\ff_n,F'(\ff_n))- \eps\|J\|_{L^1{(\Omega})} \|\ff_n\|^2.
\]

\noindent
We now exploit the convexity of the function $F(s)+a(x)s^2$ (see Remark~\ref{perturbation})
\[
(\eps\ff_n,F'(\ff_n)+a^*\fn) \geq \eps\mint \bigl( F(\ff_n)+a^*\fn^2\bigr)-\eps|\Omega| F(0)
\]
so we can write
\begin{equation}
\label{stima4}
(\eps\ff_n,\mu_n)
\geq \eps\mint F(\ff_n) - \eps\|J\|_{L^1(\Omega)}
\| \ff_n\|^2-c.
\end{equation}
If we integrate \eqref{stima approssimata} with respect to
time between $0$ and $t\in(0,T_n^*)$, taking  \eqref{stima2}-\eqref{stima5},  \eqref{stima forzante},  \eqref{stima u}, \eqref{forzante h},  \eqref{stima4} into account, we find
\begin{align}
\label{maxinter}
&\alpha \Bigl( \|\ff_n(t)\|^2+\mint F(\fn) \Bigr)+\tint \|\nabla\mu_n(\tau)\|^2\, d\tau \nonumber\\
&\leq M + \tint K(\tau) \Bigl(  \|\ff_n(\tau)\|^2 + \mint F(\fn)\Bigr) \, d\tau,
\end{align}
\noindent
which holds for all $t\in [0,T_n^*),$ where
\[
M=c\Bigl(1 + \|\ff_{0}\|^2 + \mint F(\ff_0)+ \|g \| ^2_{L^2 (0,T;V') }\Bigr),
\]
\[
K=\max{\bigl(2 \|J\|_{W^{1,1}} +
\eps\|J\|_{L^1(\Omega)} + 2\|u\|_{L^\infty
(\Omega)}^2, c(1+\|g\|_{V'})\bigr) \in L^2(0,T)}.
\]
Here, we have used the fact that that $\ff_0$ and $\ff_{0,n}$ are supposed to
belong to $V_2$. We point out that $M$ and $K$ do not depend on $n$.

Thus, inequality  \eqref{maxinter} entails that $T_n^*=+\infty$, for all $n\geq 1$.
As a consequence, \eqref{eq approssimata}-\eqref{ci approssimata}
has a unique global-in-time solution. Furthermore, we obtain the
following estimates, holding for any given $0<T<+\infty$:
\begin{equation}
\label{fn limitata}
\|\ff_n\|_{L^\infty(0,T;H)}\leq N
\end{equation}
\begin{equation}
\label{mn limitata}
\|\nabla\mu_n\|_{L^2(0,T;H)}\leq N
\end{equation}
\begin{equation}
\label{Ffn limitata}
\|F(\ff_n)\|_{L^\infty(0,T;L^1(\Omega))}\leq N
\end{equation}
where $N$ is independent of $n$.

Recalling  \eqref{apprmu}, we get
\begin{equation}
\label{gradfn in V 3}
\frac{c_0}{4}\|\nabla\fn\|^2+\frac{1}{c_0}\|\nabla\mn\|^2\geq
(\nabla\mn,\nabla\ff_n)
\geq \frac{c_0}{2}\|\nabla
\fn\|^2-c\|\fn\|^2,\end{equation}
and \eqref{fn limitata}, \eqref{mn limitata}, \eqref{gradfn in V 3} yield
\begin{equation}
\label{fn in V}
\|\fn\|_{L^2(0,T;V)}\leq N.
\end{equation}

\noindent
The next step is to deduce a (uniform) bound for $\mn$ in $\Ltv$.
Let us observe that, thanks to \eqref{mn minore di F} and \eqref{Ffn limitata}, we have
\begin{equation}
\label{mu in L1}
\mint\mn
\leq \biggl | \mint\mn \biggr|
\leq c\mint \bigl|F(\fn)\bigr|+c
\leq N.
\end{equation}
The Poincar\'e inequality implies
\begin{equation}
\label{mu in L1 2}
\biggl\|\mn-\frac{1}{|\Omega|}\mint\mn \biggl\|  \leq  c\|\nabla\mn\|,
\end{equation}
and from \eqref{mn limitata} and \eqref{mu in L1} we deduce that
\begin{equation}
\label{mn in V}
\|\mn\|_{\Ltv}\leq N.
\end{equation}

\noindent
Observe now that
\[
\|\tilde\rho_n\|^2_V=\|\mn+P_n(J\ast\fn)\|_V^2
\leq 2\|\mn\|_V^2+ 2(\|J\|^2_{L^{1}}+\|\nabla
J\|^2_{L^{1}})\|\fn\|^2,
\]
so that from \eqref{mn in V} we immediately get
\begin{equation}
\label{rho in V}
\|\tilde\rho_n\|_{\Ltv}\leq N.
\end{equation}
Furthermore, recalling  \eqref{defrho} and invoking (H4), we obtain
\[
\|\rho_n\|_{L^p(\Omega)}
\leq c a^*\|\fn\|+\|F'(\fn)\|_{L^p(\Omega)}
\leq  cN+c\Bigl( \mint |F(\fn)| \Bigr)^{1/p}\leq N,
\]
which yields the bound
\begin{equation}
\label{rho in Lp}
\|\rho_n\|_{L^\infty(0,T; L^p(\Omega))}\leq N.
\end{equation}
We finally provide an estimate for the sequence $\fn'$. We take a generic test function $\psi\in V$ and we
write it as $\psi=\psi_1+\psi_2$, where $\psi_1=P_n\psi\in\Psi_n$ and $\psi_2=\psi-\psi_1\in \Psi_n^\perp$.
It is easy to see that
\begin{equation}
\label{stima termine rho}
|(\nabla\rho_n,\nabla\psi_1)|
\leq \|\nabla\rho_n\| \|\nabla\psi_1\|
\leq \|\nabla\rho_n\| \|\nabla\psi\|_V,\\
\end{equation}
\[
|(u\fn,\nabla\psi_1)|
\leq \|u\|_{L^\infty(\Omega)} \|\nabla\psi_1\| \|\fn\|
\leq  N \|u\|_{L^\infty} \|\psi\|_V.
\]

\noindent
The reaction term can be treated as follows
\[
\Bigl| \mint\eps(\fn-\frif)\psi_1 \Bigr| \leq \bigl(|\Omega|\eps\frif+ \eps \|\fn\| \bigr)\|\psi_1\| \leq N \|\psi\|_V,
\]
and for the source term we have
\[
\Bigl| \langle g, \psi_1 \rangle\Bigr| \leq \|g\|_{V'} \|\psi_1\|_V.
\]
By using Young's lemma we infer
\begin{equation}
\label{stima termine J}
\Bigl| \mint\nabla J\ast\fn\nabla\psi_1 \Bigr|
\leq \|\psi\|_V \|\nabla J\|_{L^1(\Omega)}\|\fn\|
\leq  N \|\nabla J\|_{L^1(\Omega)}\|\psi\|_V.
\end{equation}
\noindent
From \eqref{eq approssimata}, owing to \eqref{stima termine rho}-\eqref{stima termine J}, we have that
\[
|(\fn',\psi)|
\leq (N+\|\nabla\rho_n\|+\|g\|_{V'})\|\psi\|_V,
\]
which gives
\begin{equation}
\label{fn' in V'}
\|\fn'\|_{L^2(0,T;V')}\leq N.
\end{equation}

\noindent
Collecting estimates  \eqref{fn limitata},  \eqref{fn in V},  \eqref{mn in V},  \eqref{rho in V},  \eqref{rho in Lp}, \eqref{fn' in V'}, we find
\begin{align}
\label{esistenza 1}
&\ff\in L^\infty(0,T;H)\cap\Ltv\cap H^1(0,T;V'),\\
\label{esistenza 2}
&\mu\in\Ltv,\\
\nonumber
&\tilde{\rho}\in\Ltv,\\
\nonumber
&\rho\in L^\infty (0,T; L^p(\Omega)),
\end{align}
such that, up to a subsequence,
\begin{align}
\label{conv 0}
&\fn\tow \ff \quad \text{weakly*  in } L^\infty(0,T;H),\\
\label{fn conv}
&\fn\tow \ff \quad \text{weakly  in } \Ltv,\\
\label{strong conv}
&\fn\to \ff \quad \text{strongly  in }  L^2(0,T;H) \;\text{and a.e.  in } \Omega\times(0,T)\\
\label{mn conv}
&\mn\tow \mu \quad \text{weakly  in } \Ltv,\\
\label{rhon conv}
&\rho_n\tow \tilde\rho \quad \text{weakly  in }  \Ltv,\\
\label{rho conv}
&\rho_n \tow \rho \quad \text{weakly*  in } L^\infty (0,T; L^p(\Omega)),\\
\label{fn' conv}
&\fn'\tow \ff_t \quad \text{weakly  in }  L^2(0, T; V').
\end{align}
\noindent
We can now pass to the limit in  \eqref{eq approssimata}-\eqref{ci approssimata}. First of all, from the pointwise convergence \eqref{strong conv} we have $\rho_n\to a\ff+F'(\ff)$ almost everywhere in $\Omega\times (0,T)$. Furthermore, for every $\nu\in\Psi_j$, every $j\leq n$ with $j$ fixed and for every $\chi\in C_0^\infty(0,T)$, we have that
\[
\Tint(\rho_n ,\nu)\chi(t)=\Tint(\tilde\rho_n,\nu)\chi(t).
\]
Passing to the limit in this equation, using \eqref{rhon conv} and \eqref{rho conv}, and on account of the density of $\{\Psi_j\}_{j\geq 1}$ in $H$, we get $\tilde{\rho}(\cdot,\ff)=\rho(\cdot,\ff) = a\ff+F'(\ff)$ recovering \eqref{eq rho}.
Moreover, since $\mn=P_n(\rho_n-J\ast\fn)$, then, for every $\nu\in\Psi_j$, every $k\leq j$ with $j$ fixed and for every $\chi\in C_0^\infty(0,T)$, there holds
\[
\Tint(\mu_n(t),\nu)\chi(t)dt=\Tint(\rho_n-J\ast\fn,\nu)\chi(t)dt.
\]
By passing to the limit in the above identity and using the convergences \eqref{strong conv}, \eqref{mn conv} and \eqref{rho conv}, we eventually get
\[
\mu=a\ff-J\ast\ff+ F'(\ff)=\rho-J\ast\ff.
\]
It still remains to pass to the limit in \eqref{eq approssimata} in order to recover \eqref{eq}. To this aim we multiply \eqref{eq approssimata} by $\chi\in C_0^\infty(0,T)$ and integrate in time between $0$ and $T$.
Using now  \eqref{fn conv},  \eqref{strong conv},  \eqref{rhon conv},  \eqref{fn' conv} and observing that $(\nabla\tilde\rho_n,\nabla\psi)=(\nabla\rho_n,\nabla\psi)$, where we can use \eqref{rhon conv}, we get
\begin{align*}
\nonumber
&\Tint (\ff',\psi)\chi(t)\,dt+\Tint (\nabla\rho(\cdot,\ff),\nabla\psi)\chi(t)\,dt+\Tint (\eps(\ff-\frif),\psi)\chi(t)\,dt
\\
&=\Tint (u\ff,\nabla\psi)\chi(t)\,dt+\Tint (\nabla
J\ast\ff,\nabla\psi)\chi(t)\,dt+ \Tint \langle g,\psi\rangle\chi(t)\,dt.
\end{align*}
This identity holds for every $\psi\in\Psi_j$, where $j$ is fixed, and every $\chi\in C_0^\infty(0,T)$.
Thus a standard density argument entails that
\[
\langle\ff_t,\psi\rangle+(\nabla\rho,\nabla\psi)+(\eps(\ff-\frif),\psi)=(\nabla
J\ast\ff,\nabla \psi)+(u\ff,\nabla\psi)+ \langle g,\psi\rangle,
\]
for every $\psi\in V$ and almost everywhere in $(0,T)$, that is  \eqref{eq}. The initial condition  \eqref{ci} can be easily recovered in a standard way.

Let us now assume that $\ff_0\in H$ such that $F(\ff_0)\in L^1(\Omega)$. Arguing as in~\cite[Proof of Theorem 1]{grass} we can show that there exists a solution also in this case. Of course, it is easy to realize that assumptions (H5) and (H8) suffice. In particular, on account of  \eqref{stima forzante},  \eqref{stima u},  \eqref{forzante h} and  \eqref{stima4}, from \eqref{stima5} we deduce that $F(\ff)\in L^\infty(0,T;L^1(\Omega))$.

\subsubsection{Proof of the energy identity \eqref{energy equality}}
Let us take $\psi=\mu(t)$ in equation \eqref{eq}. This yields
\begin{equation}
\label{ee 1}
\langle \ff_t, \mu\rangle +\eps(\ff-\frif,\mu)+\|\nabla\mu\|^2=(u\ff,\nabla\mu)+\langle g,\mu\rangle.
\end{equation}
By arguing as in~\cite[proof of Corollary 2]{grass} we obtain
\[
\langle \ff_t, \mu\rangle = \langle \ff_t, a\ff+F'(\ff)-J\ast\ff\rangle =\dert \EE
\]
which holds for almost every $t>0$. Then  \eqref{energy equality} follows from \eqref{ee 1}.

\subsubsection{Proof of Proposition \ref{stability}}
\label{unique sect}
Here we generalize~\cite[proof of Proposition 5]{grass trattore}.

Let $\ff_1$ and $\ff_2$ be two solutions to problem \eqref{problema} (cf. Definition \eqref{soluzione debole})
whose initial values are $\ff_{1,0}$ and $\ff_{2,0}$,
respectively. Setting $\ff=\ff_1-\ff_2$ and $\mu_i=a\ff_i+F'(\ff_i)-J\ast\ff_i$, we easily find that $\ff$ solves
\begin{equation}
\label{eq unicita}
\langle\ff_{t},\psi\rangle+(\nabla (\mu_1 - \mu_2),\nabla\psi)+(\eps\ff_,\psi)=(u\ff,\nabla\psi)
\end{equation}
for every $\psi\in V$ and almost everywhere in $(0,T)$, with initial datum $\ff_0=\ff_{1,0}-\ff_{2,0}$.

Choosing $\psi=1$, equation \eqref{eq unicita} yields
\begin{equation}
\label{edomedia-1}
\dert \barf + \eps \barf=0
\end{equation}
with $\barf(0)=\bar{\ff}_0$. Thus we have
\[
|\barf|\leq|\barf_0|.
\]
\noindent
From \eqref{eq unicita} and \eqref{edomedia-1} it follows
\[
\langle\ff_{t}-\barf_t,\psi\rangle+(\nabla(\rho_1-\rho_2),\nabla\psi)+\eps(\ff-\barf,\psi)=(\nabla
J\ast\ff,\nabla \psi)+(u\ff,\nabla\psi),
\]
for every $\psi\in V$ and almost everywhere in $(0,T)$. Choosing $\psi=A^{-1}(\ff-\barf)$, we obtain
\begin{align}
&\frac{1}{2}\dert\|\ff-\barf\|^2_{-1} + \eps\|\ff-\barf\|^2_{-1} +(\rho_1-\rho_2,\ff-\barf)
\nonumber\\
&=(J\ast\ff,\ff-\barf)+(u\ff,\nabla (A^{-1}(\ff-\barf))).\label{uno-due da stimare}
\end{align}

The convective term can be estimated as follows
\begin{equation}
\label{continuita 1}
(u\ff,\nabla(-\Delta)^{-1}(\ff-\barf))\leq\|u\|_{L^\infty}\|\ff\|\|\ff-\barf\|_{-1}\leq c\|u\|_{L^\infty}^2\|\ff-\barf\|_{-1}^2+\frac{c_0}{4}\|\ff\|^2
\end{equation}
while the convolution term can be controlled in this way
\[
(J\ast\ff,\ff-\barf)\leq\frac{c_0}{4}\|\ff\|^2+c\|\ff-\barf\|_{-1}.
\]
Furthermore, on account of (H2), we have
\begin{equation}
\label{continuita 3}
(\rho_1-\rho_2,\ff)=(a\ff+F'(\ff_1)-F'(\ff_2),\ff)\geq c_0\|\ff\|^2.
\end{equation}
On the other hand, recalling Remark~\ref{h4}, we have
\begin{align}
&(\rho_1-\rho_2,\barf) =  (a\ff+F'(\ff_1)-F'(\ff_2),\barf)\nonumber\\
\label{445}
&\leq |\barf| \Bigl[ c+c\Bigl(\mint F(\ff_1)+\mint F(\ff_2)\Bigr) + \|a\|\|\ff\|\Bigr].
\end{align}
Thus we infer the estimate
\[
(\rho_1-\rho_2,\barf)
\leq |\barf| N+ c\barf^2+\frac{c_0}{4}\|\ff\|^2
\leq N|\barf_0|+c\barf^2+\frac{c_0}{4}\|\ff\|^2.
\]

Using \eqref{continuita 1}-\eqref{445}, it follows from \eqref{uno-due da stimare} that
\[
\dert\|\ff-\barf\|^2_{-1}+\frac{c_0}{2}\|\ff\|^2\leq\bigl(N |\barf_0| +c\barf^2\bigr)+c\Vert u\Vert^2_{L^\infty}\|\ff-\barf\|^2_{-1}.
\]
Also, we have
\[
\dert \barf^2=2\barf\dert\barf=-2\sigma\barf^2\leq 0.
\]
Therefore we obtain
\[
\dert\Bigl( \|\ff-\barf\|^2_{-1}+\barf^2 \Bigr)+\frac{c_0}{2}\|\ff\|^2	
\leq c(1+\Vert u\Vert^2_{L^\infty})\bigl( \barf^2+\|\ff-\barf\|^2_{-1} \bigr) + N|\barf_0|
\]
and Gronwall's lemma entails the continuous dependence estimate
\[
\| \ff-\barf\|^2_{-1}+ \barf^2  \leq \bigl( \barf_0^2+\|\ff_0-\barf_0\|^2_{-1} + N|\barf_0| \bigr) e^{cT}
\]
which provides uniqueness.

\section{Nonlocal CHBEG equation: Well-posedness of  \eqref{CHB}}
\setcounter{equation}{0}
\label{EUCHBEG}
 Here we generalize Theorem~\ref{buona posizione} to problem \eqref{CHB}. We use a fixed point theorem. Uniqueness is proven under stronger assumptions on $F$.

\subsection{Assumptions and statements}
In addition to (H1)-(H5) and (H8) we assume the following:
\begin{description}
	\item[(I6)]
		$\la \in L^\infty(\Omega)$ is a non-negative function and $\la^*=\|\la\|_{L^\infty(\Omega)}$.
	\item[(I7)]
		$h\in L^2(\Omega)$.
\end{description}

\begin{definition}
\label{soluzione debole CHB}
Let $\ff_0\in H$ such that $F(\ff_0)\in L^1(\Omega)$ and
$T>0$ be given.
Then $\ff$ is a weak solution to problem \eqref{CHB} on
$[0,T]$ corresponding to $\ff_0$ if:
	\begin{align}
	\label{regolarita CHB 1 def}
	&\ff\in L^\infty(0,T;H)\cap L^2(0,T;V),\;\ff_t\in L^2(0,T;V');\\
	\nonumber
	&\mu=a\ff-J\ast\ff+ F'(\ff) \in \Ltv,
	\end{align}
and
\begin{align}
\label{weakCHBEG}
&\langle\ff_t,\psi\rangle+(\nabla\rho,\nabla\psi)+(\la(\ff-h),\psi)=(\nabla
J\ast\ff,\nabla \psi)+(u\ff,\nabla\psi)+\langle g, \psi\rangle;\\
\nonumber
&\ff(0)=\ff_0,
\end{align}
for every $\psi\in V$ and for almost any $t\in(0,T)$.
Here $\rho$ is given by  \eqref{eq rho}.
	
\end{definition}

\begin{remark}
\label{l2oth}
Observe that, thanks to hypotheses (H8) and (I6)-(I7), if $\phi\break\in\Lth$ then $g-\la\bigl( \phi - h\bigr)\in L^2(0,T;V')$.
\end{remark}

We are now ready to state the existence theorem:

\begin{theorem}
\label{buona posizione CHB}
Let $\ff_0\in H$ such that $F(\ff_0)\in L^1(\Omega)$ and suppose (H0)-(H5), (H8), (I6)-(I7) are satisfied. Then, for every given $ T>0$, there exists a weak solution $\ff$ to problem \eqref{CHB} on
$[0,T]$ corresponding to $\ff_0$. Furthermore, $F(\ff)\in L^\infty(0,T;L^1(\Omega))$ and the following energy equality holds for almost any $t\in (0,T)$:
\begin{equation}
\label{energy equality CHB}
\dert \mathcal{E}(\ff(t)) + \|\nabla\mu\|^2 + (\la(\ff-h),\mu) = (u\ff,\nabla\mu)+\langle g ,\mu\rangle,
\end{equation}
where $\mathcal{E}$ is defined by  \eqref{nlEnergy}.
\end{theorem}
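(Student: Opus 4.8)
The plan is to decouple the spatially varying reaction from the Cahn--Hilliard structure by a fixed-point argument. The obstruction to a direct Faedo--Galerkin estimate like the one in Theorem~\ref{buona posizione} is that, when one tests with $\mu_n=P_n(\rho_n-J\ast\ff_n)$, the product $\la\ff_n$ no longer belongs to $\Psi_n$, so the projection $P_n$ destroys the sign/convexity structure that made $(\eps\ff_n,\mu_n)$ controllable in the CHO case. To avoid this, I would freeze the reaction: given $\phi\in\Lth$, set the source $g_\phi:=g-\la(\phi-h)$, which lies in $L^2(0,T;V')$ by Remark~\ref{l2oth}, and let $\ff:=\mathcal{T}(\phi)$ be the unique weak solution of the nonlocal convective Cahn--Hilliard problem
\begin{equation*}
\ff_t+\nabla\cdot(u\ff)=\Delta\mu+g_\phi,\qquad \mu=a\ff-J\ast\ff+F'(\ff),
\end{equation*}
with no-flux condition and datum $\ff_0$. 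This is exactly problem~\eqref{problema} with $\eps=0$, so Theorem~\ref{buona posizione} (whose proof goes through verbatim with $\eps=0$, the $\eps$-terms being simply dropped) makes $\mathcal{T}$ well defined and single-valued, and any fixed point $\ff=\mathcal{T}(\ff)$ is a weak solution of \eqref{CHB}.

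Next I would check the hypotheses of the Leray--Schauder principle for $\mathcal{T}\colon\Lth\to\Lth$. Compactness follows because, for $\phi$ in a bounded set, the estimates of Theorem~\ref{buona posizione} bound $\ff$ in $L^2(0,T;V)$ together with $\ff_t\in L^2(0,T;V')$, so $\mathcal{T}$ sends bounded sets into a relatively compact subset of $\Lth$ by the Aubin--Lions--Simon lemma. Continuity I would get from uniqueness of the base problem: if $\phi_n\to\phi$ in $\Lth$ then $g_{\phi_n}\to g_\phi$ in $L^2(0,T;V')$, the $\ff_n$ are bounded, and passing to the limit exactly as in the existence part of Theorem~\ref{buona posizione} (strong $\Lth$ and a.e.\ convergence to handle $F'$) identifies every cluster point as $\mathcal{T}(\phi)$; uniqueness then forces convergence of the whole sequence.

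The core step is the uniform a priori bound. I would scale the reaction by $\theta\in[0,1]$ (replacing $\la(\phi-h)$ by $\theta\la(\phi-h)$, so $\theta=0$ gives a constant map) and estimate every fixed point of the resulting homotopy, i.e.\ every $\ff$ solving \eqref{CHB} with reaction $\theta\la(\ff-h)$. Testing with $\mu$ produces the energy identity with the extra term $\theta(\la(\ff-h),\mu)$, which I would split as $(\la\ff,\mu)-(\la h,\mu)$. Using the representation $F=G-\tfrac{a^*}{2}s^2$ of Remark~\ref{perturbation}, the convexity inequality $sG'(s)\ge G(s)-G(0)$, the sign $\la\ge0$, and $c_1>\tfrac12\|J\|_{L^1}\ge\tfrac{a^*}{2}$ from (H3), one obtains a lower bound $(\la\ff,\mu)\ge(c_1-\tfrac{a^*}{2})\mint\la\ff^2-c\|\ff\|^2-c$, while $|(\la h,\mu)|$ is controlled through $\|\mu\|\le c\|\nabla\mu\|+c(\mint F(\ff)+1)$ (Poincar\'e together with $|\mint\mu|\le c\mint F(\ff)+c$ from Remark~\ref{h4}) and Young's inequality to absorb $\|\nabla\mu\|^2$. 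Combined with the coercivity of (H2)--(H3) exactly as in \eqref{stima5}, these bounds close via Gronwall's lemma \emph{uniformly in} $\theta$, giving $\ff\in L^\infty(0,T;H)\cap L^2(0,T;V)$ and $F(\ff)\in L^\infty(0,T;L^1(\Omega))$ with constants independent of $\theta$. Leray--Schauder then yields a fixed point at $\theta=1$, i.e.\ a weak solution of \eqref{CHB}.

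Finally, the energy equality \eqref{energy equality CHB} follows by choosing $\psi=\mu$ in \eqref{weakCHBEG} and using the chain rule $\langle\ff_t,\mu\rangle=\dert\EE$, valid for almost every $t$ exactly as in the derivation of \eqref{energy equality} via \cite[Corollary~2]{grass}. I expect the main obstacle to be precisely this a priori control of the spatially varying reaction $\la(\ff-h)$: unlike the constant-coefficient CHO case, the spatial average of $\ff$ no longer satisfies a closed ODE, so the reaction must be absorbed entirely through the convexity/coercivity of $F$ and the $L^1$-control of $F'(\ff)$, while keeping every constant uniform in the homotopy parameter.
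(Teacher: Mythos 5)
Your proposal is correct and follows the same skeleton as the paper's proof: freeze the reaction term, observe that the resulting problem is \eqref{problema} with $\sigma=0$ so that Theorem~\ref{buona posizione} furnishes a well-defined solution operator on $\Lth$, run a topological fixed point argument, and then obtain \eqref{energy equality CHB} by testing with $\mu$ as in the CHO case. The one genuine difference is the fixed point machinery. The paper uses Schauder on a small ball $B_R(T^*)$, obtaining only a local-in-time solution, and then proves globality in a separate continuation step by integrating the energy identity with the estimates \eqref{chb u1}--\eqref{chb u4}. You instead invoke Leray--Schauder with a homotopy $\theta\la(\phi-h)$ and prove the global a priori bound up front; the estimates you describe for $(\la\ff,\mu)$ (convexity of $G$ in the splitting $F=G-\tfrac{a^*}{2}s^2$, the sign of $\la$, and $c_1>\tfrac{a^*}{2}$) and for $(\la h,\mu)$ (control of $\|\mu\|$ via $\|\nabla\mu\|$ and $|\overline{\mu}|\le c\mint F(\ff)+c$) are precisely the paper's \eqref{chb u4} and \eqref{chb u2}, and they are manifestly uniform in $\theta\in[0,1]$ since the reaction enters with a favorable sign up to terms controlled by $\|\ff\|^2$ and $\mint F(\ff)$. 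Your route merges the paper's two steps (local existence plus continuation) into one, at the modest cost of verifying continuity and compactness of the full homotopy $(\theta,\phi)\mapsto\ff$, which follows from the same bounds; the paper's route avoids degree-theoretic language but must track the maximal existence time $T_0$ separately. Both are complete proofs of the statement.
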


We are able to prove uniqueness of weak solutions under stronger assumptions on $F$, namely,
\begin{description}
	\item[(I8)] There exist $c_6>0$ and $c_7\geq 0$ such that
	\[
	F(s)\geq c_6s^4-c_7,\quad \forall\, s\in\R.
	\]
	Furthermore, we require the existence of a constant $c_8\geq 0$ such that
	\[
	|F'(s)-F'(r)| \leq c_8(1+s^2+r^2)|s-r|,\quad \forall\, s,r\in\R.
	\]
\end{description}

\begin{remark}
It is easy to show that (I8) implies (H3). Moreover, it is straightforward to check that (I8) is satisfied by
the standard double well potential  \eqref{dwp poli} used in~\cite{bertozzi2} (see also~\cite{bertozzi1} and~\cite{miranville pre}).
\end{remark}

We have:
\begin{theorem}
\label{buona posizione CHB2}
Let $\ff_{i,0}\in H$ such that $F(\ff_{i,0})\in L^1(\Omega)$, $i=1,2$ and suppose that (H0)-(H2) and (H4)-(H5), (H8), (I6)-(I8) are satisfied. Then, for any given $T>0$, denoting by $\ff_i$ a weak solution to problem \eqref{CHB} on $[0,T]$ corresponding to $\ff_{i,0}$, there exists a positive constant $N$ such that:
\begin{equation}
\label{stability CHB}
\|\ff_1(t)-\ff_2(t)\|^2_{\#}\leq\|\ff_{1,0}-\ff_{2,0}\|_\#^2\, e^{NT},\quad \forall t\in (0,T).
\end{equation}
\end{theorem}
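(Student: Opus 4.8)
The plan is to adapt the continuous-dependence argument of Proposition~\ref{stability} to the present setting, the essential new difficulty being that the spatial average $\barf$ is no longer governed by a closed linear ODE. Let $\ff_1,\ff_2$ be two weak solutions with data $\ff_{1,0},\ff_{2,0}$, set $\ff=\ff_1-\ff_2$ and $\rho_i=a\ff_i+F'(\ff_i)$, and subtract the two weak formulations \eqref{weakCHBEG}. Since the datum $h$ is common to both solutions, the penalization term reduces to $(\la\ff,\psi)$ and $\ff$ solves
\begin{equation*}
\langle\ff_t,\psi\rangle+(\nabla(\rho_1-\rho_2),\nabla\psi)+(\la\ff,\psi)=(\nabla J\ast\ff,\nabla\psi)+(u\ff,\nabla\psi)
\end{equation*}
for every $\psi\in V$ and a.e. in $(0,T)$. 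Choosing $\psi=1$ gives $\dert\barf=-\tfrac{1}{|\Omega|}(\la\ff,1)$, which, unlike \eqref{edomedia-1}, couples the evolution of $\barf$ to the full field through $\la$. We therefore aim to control $\|\ff\|_\#^2=\|\ff-\barf\|_{-1}^2+\barf^2$ by a single Gronwall inequality.

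First I would test the subtracted equation with $\psi=A^{-1}(\ff-\barf)\in V_2$, which is admissible since $\ff-\barf\in H_0$. Using $(\nabla v,\nabla A^{-1}(\ff-\barf))=(v,\ff-\barf)$ for $v\in V$ and the mean-zero cancellation in the time term, this yields
\begin{equation*}
\tfrac12\dert\|\ff-\barf\|_{-1}^2+(\rho_1-\rho_2,\ff-\barf)+(\la\ff,A^{-1}(\ff-\barf))=(J\ast\ff,\ff-\barf)+(u\ff,\nabla A^{-1}(\ff-\barf)).
\end{equation*}
The leading part of the potential term is coercive by (H2), $(\rho_1-\rho_2,\ff)\ge c_0\|\ff\|^2$, exactly as in \eqref{continuita 3}. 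The convolution, convective and reaction terms are all harmless: using $\|A^{-1}(\ff-\barf)\|\le c\|\ff-\barf\|_{-1}$, $\|\nabla A^{-1}(\ff-\barf)\|=\|\ff-\barf\|_{-1}$, Young's convolution inequality and (I6), each is bounded by $\tfrac{c_0}{8}\|\ff\|^2+c(1+(\la^*)^2+\|u\|_{L^\infty}^2)\|\ff-\barf\|_{-1}^2$.

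The crux is the cross term $-\barf(\rho_1-\rho_2,1)$ arising from splitting $(\rho_1-\rho_2,\ff-\barf)$, together with the evolution of $\barf^2$: both must be closed \emph{homogeneously} (purely quadratically in $\ff,\barf$) in order to obtain \eqref{stability CHB} without the additive term present in the CHO estimate. This is exactly where (I8) enters. Its quartic lower bound $F(s)\ge c_6s^4-c_7$, combined with $F(\ff_i)\in L^\infty(0,T;L^1(\Omega))$ from Theorem~\ref{buona posizione CHB}, gives a uniform bound $\|\ff_i\|_{L^4(\Omega)}\le N$; the quadratic local Lipschitz estimate $|F'(\ff_1)-F'(\ff_2)|\le c_8(1+\ff_1^2+\ff_2^2)|\ff|$ then yields
\begin{equation*}
|(\rho_1-\rho_2,1)|\le \|a\|\,\|\ff\|+c_8\|1+\ff_1^2+\ff_2^2\|\,\|\ff\|\le N\|\ff\|,
\end{equation*}
so that $|\barf(\rho_1-\rho_2,1)|\le\tfrac{c_0}{8}\|\ff\|^2+c\barf^2$ and, from $\dert\barf^2=-\tfrac{2\barf}{|\Omega|}(\la\ff,1)$, also $\dert\barf^2\le\tfrac{c_0}{8}\|\ff\|^2+c\barf^2$. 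Adding these to the tested identity and choosing all Young constants small enough that the coercive term dominates the resulting $\|\ff\|^2$ contributions, I obtain
\begin{equation*}
\dert\bigl(\|\ff-\barf\|_{-1}^2+\barf^2\bigr)\le c\bigl(1+\|u\|_{L^\infty}^2\bigr)\bigl(\|\ff-\barf\|_{-1}^2+\barf^2\bigr),
\end{equation*}
and Gronwall's lemma together with (H5) gives \eqref{stability CHB}. The main obstacle is precisely the loss of the conservation-type structure for $\barf$: unlike in Proposition~\ref{stability}, the average equation is forced by $\ff$ and cannot be decoupled, so closing the coupled system quadratically forces the use of the stronger hypothesis (I8), which supplies both the uniform $L^4$ control of the solutions and the controlled quadratic growth of $F'$.
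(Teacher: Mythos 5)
Your proposal is correct and follows essentially the same route as the paper: subtract the two weak formulations, derive the ODE $\dert\barf+\overline{\la\ff}=0$ for the average, test the mean-free equation with $A^{-1}(\ff-\barf)$, use the coercivity $(\rho_1-\rho_2,\ff)\geq c_0\|\ff\|^2$ from (H2), and invoke (I8) both for the uniform $L^\infty(0,T;L^4(\Omega))$ bound on $\ff_1,\ff_2$ and for the quadratic growth of $F'$ to close the cross term $\barf(\rho_1-\rho_2,1)$ and the $\dert\barf^2$ estimate quadratically before applying Gronwall. The only cosmetic difference is that the paper rewrites the reaction term as $(\la\ff,A^{-1}(\ff-\barf)-\overline{A^{-1}(\ff-\barf)})$ whereas you bound $(\la\ff,A^{-1}(\ff-\barf))$ directly via $\|A^{-1}(\ff-\barf)\|\leq c\|\ff-\barf\|_{-1}$; both work.
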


\begin{remark}
Assumption (I8) implies that $\ff\in L^\infty(0,T;L^4(\Omega))$.
\end{remark}

\subsection{Proof of Theorem~\ref{buona posizione CHB}}
We employ the Schauder fixed point theorem in one of his many variants.
Let us consider the following problem
\begin{equation}
\label{CH forzante}
\begin{cases}
\displaystyle \ff_t -\Delta\mu+\nabla\cdot(u\ff)=G(\phi) &\text{in $\Omega\times(0,T)$}\\[1.4ex]
\displaystyle \mu=a\ff-J\ast\ff+ F'(\ff) &\text{in $\Omega\times(0,T)$}\\[1.4ex]
\displaystyle \frac{\partial\mu}{\partial n}=0, &\text{on $\partial\Omega\times (0,T)$} \\[1.4ex]
\displaystyle \ff(0)=\ff_0, &\text{in $\Omega$}.
\end{cases}
\end{equation}
where $G(\phi)=g(x)-\la(x)(\phi-h)$. For any given $\phi\in L^2(0,T;H)$, Theorem~\ref{buona posizione} entails that there exists a unique weak solution $\ff\in \Ltv\cap L^\infty(0,T;H)$ to \eqref{CH forzante} (just take $\sigma=0$).
Let $X_T= L^2(0,T;H)$, set $\Lambda(\phi):=\ff$ for all $\phi\in X_T$, and denote by $B_R(T)$ the closed ball of $X$ of radius $R$ centered at $0$.

\subsubsection{$\Lambda: B_R(T^*)\to B_R(T^*)$ for some $T^*>0$}
Here we show that there exists $T^*>0$ such that $\Lambda: B_R(T^*)\to B_R(T^*)$. Suppose $\phi\in B_R(T)$. Then the energy identity
 \eqref{energy equality} gives
\begin{equation}
\label{chb s0}
\dert \mathcal{E}(\ff(t)) + \|\nabla\mu\|^2 = (u\ff,\nabla\mu) +  \langle G(\phi) ,\mu \rangle.
\end{equation}
Adding and subtracting $\barm$ to $\mu$ in the last term of the energy equality \eqref{chb s0} yields
\[
(\la(x)(\phi-h),\mu)=(\la(x)(\phi-h),\mu-\barm)+(\la(x)(\phi-h),\barm).
\]
Observe now that
\begin{equation}
\label{chb s1,1}
(\la(x)(\phi-h),\mu-\barm)\leq \la^* \|\mu-\barm\| \|\phi-h\|\leq \la^{*2} C_p^2\bigl( \|\phi\|^2+\|h\|^2\bigr)+\frac{1}{4}\|\nabla\mu\|^2,
\end{equation}
and (cf. \eqref{mn minore di F})
\begin{equation}
\label{chb s1,2}
(\la(x)(\phi-h),\barm)\leq \barm\|\la\|\|\phi-h\|
\leq c \|\la\| (\|\phi\|+\|h\|) \Bigl(\mint F(\ff) + 1 \Bigr).
\end{equation}
Thus \eqref{chb s1,1} and \eqref{chb s1,2} yield
\begin{equation}
\label{chb s2}
(\la(x)(\phi-h),\mu) \leq c + c(\|\phi\|^2 +\|h\|^2)+\frac{1}{4}\|\nabla\mu\|^2+c (\|\phi\|+\|h\|) \mint F(\ff) .
\end{equation}
Similarly, we have
\begin{equation}
\label{chb s3}
\langle g,\mu\rangle  \leq c+\|g\|^2_{V'} +\frac{1}{4}\| \nabla \mu \| ^2+ c\|g\|_{V'} \mint F(\ff).
\end{equation}
By exploiting \eqref{stima u}, on account of \eqref{chb s2} and \eqref{chb s3}, we obtain from \eqref{chb s0} the following
inequality
\begin{equation}
\label{chb s4}
\dert \mathcal{E}(\ff) + \frac{1}{4}\|\nabla\mu\|^2 \leq c+c\|\ff\|^2 + c(\|\phi\|+\|h\|+\|g\|_{V'})
\Bigl( 1+ \mint F(\ff)\Bigr).
\end{equation}
Observe that
\[
\mathcal{E}(\ff_0)
\leq c\|\ff_0\|^2+\mint F(\ff_0)\leq N.
\]
Furthermore, by arguing as in the proof of Theorem~\ref{buona posizione} to obtain \eqref{stima5}, we know that (H3) entails that there is
$\alpha>0$ such that
\[
\EE\geq \alpha\Bigl( \|\ff(t)\|^2+\mint F(\ff(t))\Bigr) -c.
\]
Thus, integrating \eqref{chb s4} with respect to time between $0$ and $t\in(0,T)$, we get (cf.(I7) and (I8))
\begin{align*}
&\alpha\Bigl( \|\ff(t)\|^2+\mint F(\ff(t))\Bigr) + \tint\frac{1}{4}\|\nabla\mu(\tau)\|^2d\tau \\
&\leq
M(1+t)+ c \tint K(\tau) \Bigl( \|\ff(\tau)\|^2 + \mint F(\ff(\tau))\Bigr)d\tau,\quad \forall\,t\in [0,T],
\end{align*}
where $K=\max(1,\|\phi\|+\|h|+\|g\|_{V'})$ is such that
\begin{equation}
\label{boundk}
\Vert K \Vert_{L^2(0,T)} \leq \max\{\sqrt{T}, c\sqrt{T}(1+R)\}.
\end{equation}
Then, an application of the Gronwall lemma provides
\[
\alpha\Bigl( \|\ff(t)\|^2+\mint F(\ff(t))\Bigr) + \tint\frac{1}{4}\|\nabla\mu(\tau)\|^2d\tau \leq
N(T+1)\exp{\Bigl(\tint K(\tau)\Bigr)d\tau} .
\]
Using (H3) once more to control $F(\ff)$, we end up with (cf. \eqref{boundk})
\[
\|\ff(t)\|^2 \leq c+ N(1+T)\exp{\Bigl(\sqrt{T}\| K\|_{L^2(0,T)} \Bigr)}, \quad\forall\,t\in[0,T],
\]
so that $\Lambda: B_R(T^*)\to B_R(T^*)$ for some $T^*>0$.

\subsubsection{$\Lambda$ is continuous and compact}
\label{sezione 2 esistenza chb}
Let $\{\phi_m\}$ be a bounded sequence in $X_{T^*}$ and consider $\ff_m=\Lambda(\phi_m)$. Then, it is easy to prove that every $\phi_m$ satisfies the energy equality \eqref{chb s0} with $G(\phi_m)$ and $\mu_m=a\ff_m+F'(\ff_m)-J\ast\ff_m$. Also, by arguing as in the previous subsection, we find that $\phi_m$
also satisfies \eqref{chb s4} for every $m\in\mathbb{N}$. In particular, we have that
\begin{align}
\nonumber
&\alpha\Bigl( \|\ff_m(t)\|^2+\mint F(\ff_m(t))\Bigr)+ \frac{1}{4}\tint\|\nabla\mu_m(\tau)\|^2d\tau\\
\label{chb t9}
&\leq N(1+T^*)\exp{\Bigl(c\sqrt{T^*} \Vert K\Vert_{L^2(0,T^*)}\Bigr)}=N^*,
\end{align}
where $N$ is independent of $m$.

As a direct consequence of \eqref{chb t9}, we have that
\begin{align}
\label{chb t10}
&\|\ff_m\|_{L^\infty(0,T^*;H)}\leq N^*,\\
\nonumber
&\| F(\ff_m)\|_ { L^\infty(0,T^*;L^1(\Omega))}\leq N^*,\\
\nonumber
&\|\nabla\mu_m\|_{ L^2(0,T^*;V)}\leq N^*
\end{align}
for every $m\in\mathbb{N}$.
Thus, from \eqref{gradfn in V 3}, \eqref{mu in L1} and \eqref{mu in L1 2} we deduce
\begin{align}
\nonumber
&\|\ff_h\|_{L^2(0,T^*;V)}\leq N^*,\\
\label{chb t14}
&\|\mu_h\|_{L^2(0,T^*;V)}\leq N^*.
\end{align}
Then, by comparison in the equation, we find
\begin{equation}
\label{chb t15}
\|\ff_{m,t}\|_{L^2(0,T^*;V')}\leq N^*.
\end{equation}
From \eqref{chb t10}-\eqref{chb t15} we infer the existence of
$\mu\in L^2(0,T^*;V)$ and $\ff\in L^\infty(0,T^*;H)\cap L^2(0,T^*;V)$ with
$\ff_t\in  L^2(0,T^*;V')$,
such that, for a non-relabeled subsequence, we have
\begin{align}
\nonumber
&\ff_m\tow \ff \quad \text{weakly*  in }\; L^\infty(0,T^*;H),\\
\nonumber
&\ff_m\tow \ff \quad \text{weakly  in }\; L^2(0,T^*;V),\\
\nonumber
&\ff_m\to \ff \quad \text{strongly  in } L^2(0,T^*;H)\; \text{and a.e.  in }  \Omega\times(0,T^*),\\
\nonumber
&\mu_m\tow \mu \quad \text{weakly  in }\; L^2(0,T^*;V),\\
\nonumber
&\ff_m\tow \ff_t \quad \text{weakly  in }\quad L^2(0, T^*; V').
\end{align}
In particular, the strong convergence of $\{\ff_m\}$ proves that $\overline{\Lambda(B_R(T^*))}$ is compact in $X_{T^*}$.

In order to prove the continuity of $\Lambda$ we just assume that $\{\phi_m\}$ converges to some $\phi$ in $X_{T^*}$.
Then, on account of the above bounds and of the uniqueness for problem \eqref{CH forzante}, we have that
$\Lambda(\phi_m)$ converges to $\Lambda(\phi)$. We can thus conclude that $\Lambda$ has a fixed point $\ff$ which is
a local weak solution.

\subsubsection{The local solution $\ff$ is global}
\label{3d step section}
We know that $\ff$ satisfies the energy identity \eqref{energy equality CHB} on some maximal interval $(0,T_0)$.

Observe now that (cf.~\eqref{chb s2} and \eqref{chb s3})
\begin{align}
\label{chb u1}
&(u\ff,\nabla\mu)\leq u^{*2}\|\ff\|^2+\frac{1}{4}\|\nabla\mu\|^2 ,\\
\label{chb u3}
&\langle g, \mu\rangle  \leq c+\|g\|^2_{V'} +\frac{1}{4}\| \nabla \mu \| ^2+ c\|g\|_{V'} \mint F(\ff),\\
\label{chb u2}
&(\la h,\mu)\leq c+c\mint F(\ff)+\frac{1}{4}\|\nabla\mu\|^2.
\end{align}
Furthermore, since (H2) implies \eqref{F coerciva}, thanks to Remark~\ref{h4} we get
\begin{equation}
\label{chb u4,1}
F'(\ff)\ff+\frac{a^*}{2}\ff^2\geq F(\ff)-F(0)+\frac{a^*}{2}\ff^2.
\end{equation}
On the other hand, we deduce from \eqref{chb u4,1} that
\begin{equation}
\label{chb u4}
(\la\ff,\mu)\geq(\la\ff,a\ff+F'(\ff)-J\ast\ff)\geq-c-c\|\ff\|^2-\la^*\mint F(\ff).
\end{equation}
Then, integrating \eqref{energy equality CHB} with respect to time from $0$ and $t\in (0,T_0)$ and by exploiting
\eqref{chb u1}, \eqref{chb u3}, \eqref{chb u2} and \eqref{chb u4}, we obtain
{\small\begin{align*}
&\mathcal{E}(\ff(t)) + \frac{1}{4}\tint\|\nabla\mu(\tau)\|^2 d\tau\\
&\leq ct+\mathcal{E}(\ff_0)+\tint\|g(\tau)\|_{V'}^2d\tau
+c\tint\|\ff(\tau)\|^2d\tau
+c\tint(1+\|g(\tau)\|_{V'})\mint F(\ff(\tau)d\tau.
\end{align*}}This gives, on account of (I8), that $T_0=T$. In particular, note that $F(\ff) \in L^\infty(0,T;L^1(\Omega))$.

\subsection{Proof of Theorem~\ref{buona posizione CHB2}}
Consider two solutions, $\ff_1$ and $\ff_2$ to \eqref{CHB} with initial data $\ff_{1,0}$ and $\ff_{2,0}$, respectively.
Set $\ff=\ff_1-\ff_2$, $\mu_i=a\ff_i+F'(\ff_i)-J\ast\ff_i$, $i=1,2$, and observe that
\begin{equation}
\label{eq unicita chb}
\langle\ff_{t},\psi\rangle+(\nabla (\mu_1 - \mu_2),\nabla\psi)+(\lambda\ff,\psi)=(u\ff,\nabla\psi),
\end{equation}
for any $\psi\in V$ and almost everywhere in $(0,T)$, with initial condition $\ff(0)=\ff_{1,0}-\ff_{2,0}$.

Taking $\psi=1$, equation \eqref{eq unicita chb} yields
\[
\dert \barf + \overline{\lambda\ff}=0.
\]
Therefore we have
\[
\langle\ff_{t}-\barf_t,\psi\rangle+(\nabla (\mu_1 - \mu_2),\nabla\psi)+(\lambda\ff-\overline{\lambda\ff},\psi)=(u\ff,\nabla\psi).
\]
Let us take $\psi=A^{-1}(\ff-\barf)$. This gives
\begin{equation}
\label{eq unicita chb3}
\frac{1}{2}\frac{d}{dt} \|\ff-\barf\|_{-1}^2+(\mu_1 - \mu_2,\ff-\barf)+(\lambda\ff-\overline{\lambda\ff},A^{-1}(\ff-\barf))
=(u\ff,\nabla(A^{-1}(\ff-\barf))).
\end{equation}
In order to estimate the reaction term we observe that
\[
(\lambda\ff-\overline{\lambda\ff},A^{-1}(\ff-\barf))
=(\lambda\ff,A^{-1}(\ff-\barf)-\overline{A^{-1}(\ff-\barf)})
\]
so that
\[
(\lambda\ff-\overline{\lambda\ff},A^{-1}(\ff-\barf))
\leq \frac{c_0}{8}\|\ff\|^2 +c\|\ff\|_\#{\color{red}^2} .
\]
\noindent
Furthermore, there holds
\[
(a\ff,\barf) \leq c\barf^2+\frac{c_0}{8}\|\ff\|^2
\]
and, owing to (I8), we have
\begin{align*}
&(F'(\ff_1)-F'(\ff_2), \barf)\leq c_9(\ff(1+\ff_1^2+\ff_2^2), \barf)\\
&\leq \frac{c_0}{8}\|\ff\|^2+c\barf^2(1+\|\ff_1\|^4_{L^4(\Omega)}+\|\ff_2\|^4_{L^4(\Omega)}).
\end{align*}
Assumption (I8) also entails $\ff_i\in L^\infty(0,T;L^4(\Omega))$, $i=1,2$. Thus we obtain
\begin{equation}
\label{unicita chb 103}
(F'(\ff_1)-F'(\ff_2), \barf)\leq N\barf\|\ff\|\leq N\barf^2+\frac{c_0}{8}\|\ff\|^2.
\end{equation}

Collecting \eqref{continuita 1}-\eqref{continuita 3} and \eqref{eq unicita chb3}-\eqref{unicita chb 103}, we get
\begin{equation}
\label{megastima chb unic 1}
\frac{1}{2}\frac{d}{dt} \|\ff-\barf\|_{-1}^2+\frac{c_0}{2}\|\ff\|^2\leq N(\|\ff-\barf\|^2_{-1}+\barf^2).
\end{equation}
Besides we have that
\begin{equation}
\label{megastima chb unic 2}
\dert \barf^2
=2\barf\dert\barf\leq 2\barf\mint\lambda\ff
\leq \frac{c_0}{8}\|\ff\|^2+c\barf^2.
\end{equation}
We add now \eqref{megastima chb unic 2} to \eqref{megastima chb unic 1} and we find
\[
\frac{d}{dt} \bigl(\|\ff-\barf\|_{-1}^2+\barf^2\bigr) \leq N\bigl(\|\ff-\barf\|^2_{-1}+\barf^2\bigr)
\]
so that Gronwall's lemma yields \eqref{stability CHB}.

\section{Nonlocal CHO equation: The global attractor}
\setcounter{equation}{0}
\label{GACHO}

Let us take $g=0$ for the sake of simplicity and suppose that hypotheses (H0)-(H2) and (H4), (H6)-(H7) hold.
Furthermore, replace (H3) and (H5) by, respectively
\begin{description}
\item[(H9)] There exist $c_{9}>0$, $c_{10}>0$ and $q>0$ such that
$$
F''(s)+a(x)\geq c_{9}|s|^{2q}-c_{10},\qquad\forall s\in\R,\text{ a.e. }x\in\Omega;
$$
\end{description}
\begin{description}
\item[(H10)]  $u\in (L^\infty(\Omega) \cap H^1_0(\Omega))^d$.
\end{description}

Then, on account of Theorem \ref{buona posizione} and Proposition \ref{stability},
for any $\ff_0\in L^2(\Omega)$ such that $F(\ff_0)\in L^1(\Omega)$, there exists a unique global weak solution $\ff$.
As a consequence we can define a semigroup on a suitable phase space.

More precisely, we define
\[
\mathcal{Y}:=\bigl\{\ff\in H:\, F(\ff)\in L^1(\Omega)\bigr\},
\]
and we equip it with the distance
\begin{equation}
\label{CHO metrica attrattore}
d(\ff_1,\ff_2)=\|\ff_1-\ff_2\|+\Bigl| \mint F(\ff_1)-\mint F(\ff_2)\Bigr|^{\frac{1}{2}}.
\end{equation}
Then, for any $\ff_0\in \mathcal{Y}$, we set
$$
\ff(t):= S(t)\ff_0,
$$
$\ff$ being the unique global solution to \eqref{problema}.

We will show that the dynamical system $(\mathcal{Y},S(t))$ is dissipative, that is, it has a bounded absorbing set.
Then, following the strategy outlined in \cite{grass trattore}, we prove that the same system has the (connected) global attractor.

\subsection{Bounded absorbing sets}
\label{abs set}
\noindent

\begin{proposition}
\label{S absorbing}
Let (H0)-(H2), (H4), (H6)-(H7), (H9)-(H10) hold. Then $(\mathcal{Y},S(t))$ has a bounded absorbing set.
\end{proposition}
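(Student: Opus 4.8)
The plan is to exhibit a single bounded subset of $(\mathcal{Y},d)$ that absorbs every bounded set, by pairing the elementary control of the spatial average $\barf$ with a genuinely dissipative estimate for the nonlocal energy $\mathcal{E}$. Since the metric in \eqref{CHO metrica attrattore} is controlled by $\|\ff\|$ and $\mint F(\ff)$, and since (H9) implies $F(s)\geq c_1 s^2-c_2$ with $c_1$ as large as we like (hence (H3)), the coercivity $\EE\geq\alpha(\|\ff(t)\|^2+\mint F(\ff(t)))-c$ obtained in \eqref{stima5} persists. It therefore suffices to prove that $\EE$ is eventually bounded by a constant independent of $\ff_0$ on bounded sets, together with the absorbing property for $\barf$.

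First I would control the average. Testing the weak formulation with $\psi=1$ (here $g=0$) gives $\dert\barf+\eps\barf=\eps\frif$, whence $|\barf(t)|\leq|\barf(0)|e^{-\eps t}+|\frif|$. Thus the interval $\{|\barf|\leq|\frif|+1\}$ is absorbing for the average, entered after a time depending only on the radius of the prescribed bounded set of initial data. Next, starting from the energy equality \eqref{energy equality} with $g=0$, I would estimate the convective term by $(u\ff,\nabla\mu)\leq\frac14\|\nabla\mu\|^2+c\|\ff\|^2$ via (H10), and handle the reaction term $\eps(\ff-\frif,\mu)$ by inserting $\mu=a\ff-J\ast\ff+F'(\ff)$: the convexity of $G=F+\frac{a^*}{2}s^2$ (Remark~\ref{perturbation}) tested against $\ff-\frif$ produces $\eps\mint F(\ff)$ up to terms bounded by $c\|\ff\|^2+c$, and the nonlocal part $a\ff-J\ast\ff$ is also absorbed into $c\|\ff\|^2+c$. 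This yields
\[
\dert\EE+\tfrac34\|\nabla\mu\|^2+\eps\mint F(\ff)\leq c\|\ff\|^2+c.
\]

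The main obstacle is the term $c\|\ff\|^2$ on the right: the dissipation $\|\nabla\mu\|^2$ controls only oscillations of $\mu$ and cannot bound $\|\ff\|$ itself, so without extra structure the inequality does not close. This is exactly where (H9) is indispensable. Integrating $F''+a\geq c_9|s|^{2q}-c_{10}$ twice gives the superquadratic bound $F(s)\geq c|s|^{2q+2}-c$, hence $\mint F(\ff)\geq c\|\ff\|^{2q+2}-c$ with $2q+2>2$. By Young's inequality $c\|\ff\|^2\leq\frac{\eps}{2}\,c\|\ff\|^{2q+2}+C$, so the offending term is absorbed into $\frac{\eps}{2}\mint F(\ff)$, leaving
\[
\dert\EE+\tfrac{\eps}{2}\mint F(\ff)\leq c.
\]
The same growth yields the upper bound $\EE\leq c\,\mint F(\ff)+c$ (the nonlocal term is $\leq c\|\ff\|^2\leq c\mint F(\ff)+c$), so $\mint F(\ff)\geq\frac1c\EE-c$ and the inequality becomes $\dert\EE+\beta\EE\leq c$ for some $\beta>0$.

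Finally, Gronwall's lemma gives $\EE\leq\mathcal{E}(\ff_0)e^{-\beta t}+c/\beta$, so $\EE\leq c/\beta+1$ for every $t$ beyond a time depending only on the bound of the initial energy over the given bounded set. Combining this with the coercivity $\EE\geq\alpha(\|\ff\|^2+\mint F(\ff))-c$ and with the absorbing interval already obtained for $\barf$, one reads off a bound for $\|\ff\|$ and for $\mint F(\ff)$, hence a bounded set of $(\mathcal{Y},d)$ that absorbs all bounded sets. I expect the only delicate bookkeeping to be checking that all constants produced along the way are genuinely independent of $n$-type approximations and of $\ff_0$ (they depend on $\eps,\frif,J,F,\Omega,u$ only), which follows as in the corresponding steps of the proof of Theorem~\ref{buona posizione}.
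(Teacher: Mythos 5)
Your proof is correct, but your treatment of the reaction term is genuinely different from the paper's, and in fact more direct. The paper bounds $(\sigma\frif,\mu)$ through the estimate $|\mint\mu|\leq c(\mint F(\ff)+1)$ coming from (H4) (cf.\ \eqref{stima forzante}, \eqref{forzante h}), which leaves a term $C_2\mint F(\ff)$ with an uncontrolled constant on the wrong side of \eqref{stima energia k}; to repair this it adds $(C_2+1)(\mu,\ff-\barf)$, bounded above by $\frac14\|\nabla\mu\|^2+c\|\ff\|^2$ and below by $\frac12\EE+(C_2+\frac12)\mint F(\ff)$ minus lower-order terms. That lower bound uses convexity relative to the time-dependent average $\barf$, so it requires a uniform bound on $F(\barf)$, hence the ODE for $\barf$ and the two-regime function $\mathcal{H}(t)$ with the switching time $t_0(\mathcal{B})$. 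You instead apply the convexity inequality for $G=F+\frac{a^*}{2}s^2$ (Remark~\ref{perturbation}) directly to $(F'(\ff),\ff-\frif)$ against the \emph{fixed} constant $\frif$, which produces $+\sigma\mint F(\ff)$ on the dissipative side at once with no competing $\mint F$ term; converting $\mint F$-dissipation into $\EE$-dissipation via the elementary bound $\EE\leq c(\mint F(\ff)+1)$ and using (H9) to absorb $c\|\ff\|^2$ (exactly as the paper does) then closes the argument with a Gronwall constant that is universal from $t=0$ — your separate control of $\barf$ is in fact not even needed. What the paper's longer route buys is robustness: the $(\mu,\ff-\barf)$ device, together with the separate dissipative control of the average, is the template that carries over to the CHBEG equation (Propositions~\ref{media controllata a dovere} and~\ref{S absorbing CHB}), where the reaction term $\lambda(x)(\ff-h(x))$ is no longer of the form $\sigma(\ff-\frif)$ with $\frif$ a prescribed constant and your shortcut is unavailable.
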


\begin{proof}
We adapt~\cite[proof of Corollary~2]{grass}. By exploiting \eqref{stima forzante}, \eqref{stima u}, \eqref{forzante h} and \eqref{stima4} in the energy equality \eqref{energy equality} (with $g=0$), we obtain
\begin{equation}
\label{stima energia k}
\frac{d}{dt} \mathcal{E}(\ff(t))+\frac{1}{2}\|\nabla\mu\|^2
\leq C_1\|\ff\|^2 + C_2\mint F(\ff)+C_3
\end{equation}
Observe that
\begin{equation}
\label{mu-mumedio1}
(C_2+1)(\mu,\ff-\bar{\ff})
	=(C_2+1)(\mu-\bar{\mu},\ff)
	\leq \frac{1}{4}\|\nabla\mu\|^2+C_p^2(C_2+1)^2\|\ff\|^2.
\end{equation}

On the other hand, taking $\psi=1$ in \eqref{eq}, we get
$$
\frac{d}{dt}\bar{\ff} + \sigma\bar{\ff}=\sigma\bar{\ff}^*.
$$
Therefore, recalling Remark~\ref{growth}, we deduce that for every bounded set $\mathcal{B}$ of $\mathcal{Y}$ there exists $t_0=t_0(\mathcal{B})\geq 0$ and a positive constant $c_{\mathcal{B}}$ such that
\begin{equation}
\label{chb as 98}
F(\bar{\ff}) + \frac{a^*}{2} \|\ff-\bar{\ff}\|^2 \leq a^*\|\ff\|^2+c_{\mathcal{B}},\quad \forall t\leq t_0,
\end{equation}
\begin{equation}
\label{chb as 99}
F(\bar{\ff}) + \frac{a^*}{2} \|\ff-\bar{\ff}\|^2 \leq a^*\|\ff\|^2+ C_4,\quad \forall t> t_0.
\end{equation}

Then, on account of Remark~\ref{perturbation}, using \eqref{chb as 98}-\eqref{chb as 98} we get
\[
\mint F'(\ff)(\ff-\bar{\ff})\geq\mint F(\ff)-\mint F(\bar{\ff}) - \frac{a^*}{2}\|\ff-\bar{\ff}\|^2
\geq \mint F(\ff) - a^*\|\ff\|^2-\mathcal{H}(t),
\]
where $\mathcal{H}(t)=C_{\mathcal{B}}$ for all $t\in [0,t_0]$ and $\mathcal{H}(t)=C_4$ for all $t>t_0$.
Therefore we have
\begin{equation}
\label{mu-mumedio99}
(C_2+1)(\mu,\ff-\barf)
\geq
\frac{1}{2}\EE+\left(C_2+\frac{1}{2}\right) \mint F(\ff)
-C_5(\|\ff\|^2 + \mathcal{H}(t)).
\end{equation}
By putting together \eqref{mu-mumedio1} and \eqref{mu-mumedio99} we obtain
\[
\frac{1}{2}\EE+ C_2\mint F(\ff)+\frac{1}{2}\mint F(\ff)
\leq
\frac{1}{4}\|\nabla\mu\|^2+C_5(\|\ff\|^2 + \mathcal{H}(t)).
\]
Then, on account of (H9), we find
\[
\frac{1}{4}\|\nabla\mu\|^2+C_6(1+\mathcal{H}(t))
\geq
\frac{1}{2}\EE +  C_2\mint F(\ff)+\frac{c_{10}}{2}\mint|\ff|^{2+2q} - C_7\|\ff\|^2
\]
so that
\begin{equation}
\label{stima assorbente 2 fine}
\frac{1}{4}\|\nabla\mu\|^2+C_8(1+\mathcal{H}(t))
\geq C_2\mint F(\ff)+\frac{1}{2}\EE+ C_8 \|\ff\|^2+\frac{c_{10}}{4}\mint|\ff|^{2+2q}
\end{equation}

Finally, we combine \eqref{stima energia k} with \eqref{stima assorbente 2 fine} and obtain
\begin{equation}
\label{dissineqCHO}
\frac{d}{dt}\EE+\frac{1}{2}\EE \leq C_9(1+\mathcal{H}(t)).
\end{equation}
Then the thesis follows from Gronwall's lemma applied for $t\geq t_0$.
\end{proof}

\begin{remark}
\label{regolarita H9}
From \eqref{stima5}, \eqref{dissineqCHO} and (H9) we also deduce that $\ff\in L^\infty(0,+\infty;\break L^{2+2q}(\Omega))$.
\end{remark}

\subsection{Global attractor}

\begin{theorem}
\label{AGCHO}
Let (H0)-(H2), (H4), (H6)-(H7), (H9)-(H10) hold. Then $(\mathcal{Y},S(t))$ has a (connected) global attractor.
\end{theorem}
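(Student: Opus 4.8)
The plan is to apply the classical criterion for the existence of a global attractor, following the strategy of~\cite{grass trattore}: a semigroup acting continuously on a complete metric space that possesses a bounded absorbing set and is asymptotically compact admits a global attractor, which is connected whenever the phase space is. Proposition~\ref{S absorbing} already supplies the bounded absorbing set $\mathcal{B}_0$ in $(\mathcal{Y},d)$, and Theorem~\ref{buona posizione} together with Proposition~\ref{stability} makes $S(t)$ a well-defined semigroup on $\mathcal{Y}$. The first routine point is that $(\mathcal{Y},d)$ is a complete metric space and that each $S(t)$ is $d$-continuous: Proposition~\ref{stability} controls the $L^2$-part only in the weaker $\|\cdot\|_\#$-norm, so the upgrade to $d$-continuity (both the $L^2$-term and the potential term) is obtained by the very same energy-equation argument used below for asymptotic compactness. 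Thus the substantial content is asymptotic compactness.

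To prove asymptotic compactness, fix a bounded set $\mathcal{B}\subset\mathcal{Y}$, a sequence $\ff_{0,n}\in\mathcal{B}$ and times $t_n\to+\infty$, and extract a $d$-convergent subsequence from $\{S(t_n)\ff_{0,n}\}$. Set $\psi_n(s):=S(t_n+s)\ff_{0,n}$ for $s\in[-T,0]$ with $T>0$ fixed. For $n$ large, $\psi_n(-T)\in\mathcal{B}_0$, so the a priori bounds of Section~\ref{EUCHO} (e.g.~\eqref{fn in V},~\eqref{mn in V},~\eqref{fn' in V'}), made uniform in time by the dissipative estimate~\eqref{dissineqCHO} and Remark~\ref{regolarita H9}, bound $\psi_n$ uniformly in $L^\infty(-T,0;H)\cap L^2(-T,0;V)$ with $\psi_{n,t}$ uniformly bounded in $L^2(-T,0;V')$. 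The Aubin--Lions lemma then yields, up to a subsequence, $\psi_n\to\psi$ strongly in $C([-T,0];H)$ and a.e. in $\Omega\times(-T,0)$, together with $\psi_n\tow\psi$ and $\mn\tow\mu$ weakly in $L^2(-T,0;V)$. Passing to the limit in the weak formulation shows that $\psi$ is itself a complete bounded weak solution, hence $S(t_n)\ff_{0,n}=\psi_n(0)\to\psi(0)$ strongly in $H$, which controls the first term of $d$.

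The hard part is the potential term $\bigl|\mint F(\psi_n(0))-\mint F(\psi(0))\bigr|$, for which a.e. convergence and the uniform $L^1$-bound are not sufficient; here I would invoke the energy-equation (Ball) method. Since the nonlocal quadratic part of $\mathcal{E}$ is continuous with respect to strong $H$-convergence, it suffices to show $\mathcal{E}(\psi_n(0))\to\mathcal{E}(\psi(0))$. Lower semicontinuity gives $\mathcal{E}(\psi(0))\le\liminf_n\mathcal{E}(\psi_n(0))$ immediately. For the reverse inequality, integrate the energy equality~\eqref{energy equality} (with $g=0$) over $[-T,0]$ for each $\psi_n$ and for $\psi$; the dissipation $\int_{-T}^0\|\nabla\mn\|^2$ is weakly lower semicontinuous, while the reaction term $\eps(\psi_n-\frif,\mn)$ and the transport term $(u\psi_n,\nabla\mn)$ pass to the limit using the strong $H$-convergence of $\psi_n$ and the weak $L^2(-T,0;V)$-convergence of $\mn$. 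Comparing the two energy identities yields $\limsup_n\mathcal{E}(\psi_n(0))\le\mathcal{E}(\psi(0))$, whence $\mathcal{E}(\psi_n(0))\to\mathcal{E}(\psi(0))$ and therefore $\mint F(\psi_n(0))\to\mint F(\psi(0))$, giving $d$-convergence. I expect this to be the main obstacle, precisely because the non-dissipative reaction and convective terms must be passed to the limit without destroying the sign structure on which Ball's argument rests.

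Finally, connectedness of the attractor follows from the connectedness of $\mathcal{Y}$ and the $d$-continuity of $t\mapsto S(t)\ff_0$. Since (H9) forces $F$ to grow at least like $|s|^{2+2q}$, every $\ff_0\in\mathcal{Y}$ belongs to $L^{2+2q}(\Omega)$, so the path $r\mapsto r\ff_0$, $r\in[0,1]$, remains in $\mathcal{Y}$ and is $d$-continuous; hence $\mathcal{Y}$ is star-shaped with respect to $0$ and in particular connected, and the standard argument then yields a connected global attractor.
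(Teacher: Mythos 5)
Your overall strategy coincides with the paper's: the bounded absorbing set from Proposition~\ref{S absorbing}, strong convergence in $\mathcal{Y}$ obtained by upgrading weak compactness of trajectories through the energy identity \eqref{energy equality}, and connectedness of the attractor via star-shapedness of $\mathcal{Y}$ (the paper's path $t\mapsto t\psi$ is exactly your $r\mapsto r\ff_0$). The difference lies in how the energy identity is exploited, and there your argument has a genuine gap. You integrate \eqref{energy equality} over the window $[-T,0]$ for $\psi_n(\cdot)=S(t_n+\cdot)\ff_{0,n}$ and for the limit $\psi$, and claim that comparing the two identities gives $\limsup_n\mathcal{E}(\psi_n(0))\le\mathcal{E}(\psi(0))$. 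Writing out the comparison, what you actually obtain is
\[
\limsup_n\mathcal{E}(\psi_n(0))\le\mathcal{E}(\psi(0))+\Bigl[\limsup_n\mathcal{E}(\psi_n(-T))-\mathcal{E}(\psi(-T))\Bigr],
\]
and the bracket is \emph{nonnegative} (weak lower semicontinuity of $\mathcal{E}$ acts in the wrong direction at the left endpoint) with no mechanism forcing it to vanish: at $s=-T$ you only have a uniform bound coming from the absorbing set. The usual cure, an exponential weight $e^{\delta s}$ damping the endpoint term as $T\to\infty$, is not free here either, since it introduces $\delta\int_{-T}^0 e^{\delta s}\mathcal{E}(\psi_n(s))\,ds$, whose $\limsup$ must be bounded \emph{above} by the corresponding integral for $\psi$; that requires strong $L^1$-in-time convergence of $\mint F(\psi_n)$, which is essentially the quantity you are trying to control.

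The paper sidesteps this by not proving asymptotic compactness along $t_n\to\infty$ directly. It works in Ball's generalized-semiflow framework \cite{Ball}: it suffices to show that $S(t)$ is eventually bounded and \emph{compact}, i.e., for a bounded sequence of initial data $\{\ff_j(0)\}$ the values $\ff_j(t)$ converge (up to a subsequence) for each \emph{fixed} $t>0$; the passage to divergent times is then handled by the abstract theorem using the absorbing set. For fixed $t$ the endpoint problem disappears because the paper introduces the auxiliary functional $\tilde{\mathcal{E}}(\ff(t))=\EE-\tint(u\ff,\nabla\mu)+\eps\tint(\ff-\frif,\mu)$, which by \eqref{energy equality} satisfies $\frac{d}{dt}\tilde{\mathcal{E}}=-\|\nabla\mu\|^2\le 0$: one first gets $\tilde{\mathcal{E}}(\ff_j(t))\to\tilde{\mathcal{E}}(\ff(t))$ for a.e.\ $t>0$ (following \cite{grass trattore}), and then uses monotonicity and continuity of these scalar functions to upgrade the convergence to every $t>0$, whence $\EEj\to\EE$ and strong convergence in $\mathcal{Y}$. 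To repair your proof, either adopt this monotone auxiliary functional on $[0,t]$ with $t$ fixed and invoke Ball's abstract result, or justify why the defect at $-T$ can be made small; as written, the step ``comparing the two energy identities'' does not close.
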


\begin{proof}
Recalling~\cite{grass trattore} and taking Proposition~\ref{S absorbing} into account, we can show that $S(t)$ is an
eventually bounded semiflow in the sense of~\cite{Ball}. Therefore, we just need to prove that it is compact
(see Ball's result~\cite{Ball} reported in~\cite[Theorem~2]{grass trattore}).

Let $\{\ff_j(0)\}_j$ be a bounded sequence in $\mathcal{Y}_m$. Observe that, by definition of $S(t)$, every $\ff_j(t):=S(t)\ff_j(0)$ satisfies the energy inequality \eqref{stima energia k}. If we integrate it with respect to time between $0$ and $t\in(0,T)$ with $T$ generic, and use \eqref{stima5}, thanks to the Gronwall lemma we obtain the bounds \eqref{fn limitata}-\eqref{Ffn limitata} for some $N>0$ independent of $j$. Besides, arguing as in the previous sections and using \eqref{gradfn in V 3},  \eqref{mu in L1},  \eqref{mu in L1 2}, we deduce the further bounds \eqref{fn in V}, \eqref{mn in V} and \eqref{fn' in V'}.
Therefore, we can find $\ff$, $\mu$, $\tilde\rho$, $\rho$ which satisfy \eqref{esistenza 1} and \eqref{esistenza 2}
such that, for non relabeled subsequences, convergences  \eqref{conv 0}-\eqref{mn conv} and \eqref{fn' conv} hold.
We can now follow~\cite[Proof of Theorem~3]{grass trattore} to show that
$$
\tilde{\mathcal{E}}(\ff_j(t))\to\tilde{\mathcal{E}}(\ff(t)),\qquad \text{a.e. }t>0
$$
where
$$
\tilde{\mathcal{E}}(\ff(t)):=\EE-\tint(u\ff,\nabla\mu)+\eps\tint(\ff-\frif,\mu).
$$
Since
$t\mapsto \tilde{\mathcal{E}}(\ff(t))$ and $t\mapsto \tilde{\mathcal{E}}(\ff_j(t))$ are nonincreasing in time and continuous on $[0,+\infty)$ we obtain $\tilde{\mathcal{E}}(\ff_j(t))\to\tilde{\mathcal{E}}(\ff(t)), \forall t>0$ which yields $\EEj\to\EE,$ for all $t>0$ so that
$\ff_j(t)\to\ff(t)$ strongly in $\mathcal{Y},$ for all $t>0$. Thus the dynamical system has a global attractor.

Finally, observe that, for every $t\in[0,1],$ $\psi\in\mathcal{Y}$ we have
\begin{align*}
\bigl[ d(0,t \psi) \bigr]^2&\leq 2t^2\|u\|^2+2\Bigl| \mint F(t \psi)-\mint F(0)\Bigr|\\&\leq 2c + 2t^2 \|\psi\|^2 +2\mint F(t \psi) + 2\Bigl| \mint F(0)\Bigr|
\end{align*}
and, thanks to Remark~\ref{perturbation}, we obtain
\[
\bigl[ d(0,t \psi) \bigr]^2 \leq 2c+2t \bigl( t + \frac{a^*}{2} \bigr) \| \psi \|^2+2t\Bigl| \mint F(u)-\mint F(0)\Bigr| +4\bigl| F(0) \Bigr|.
\]
This implies that $\mathcal{Y}$ is connected. Hence the global attractor is also connected (see~\cite[Corollary~4.3]{Ball}).\end{proof}

\section{Nonlocal CHBEG equation: The global attractor}
\setcounter{equation}{0}
\label{GACHBEG}
Here we show that problem \eqref{CHB} can also be viewed as a dissipative dynamical system
which possesses a connected global attractor.

Let us assume that assumptions (H0)-(H1), (I6)-(I7) and (H10). Then take $g=0$ for the sake of simplicity
and set
\begin{description}
\item[(I9)]  $F(s):=\frac{1}{4}\bigl(s^2-1)^2.$
\item[(I10)]
		$u\in ((L^\infty(\Omega)\cap H^1_0(\Omega))^d$ such that $\nabla\cdot u\in L^\infty(\Omega)$.	
\end{description}
These further restrictions are due to the peculiar difficulty of this equation, that is,
the uniform control of $|\barf(t)|$ (see \cite{miranville pre} for the local case). Indeed,
the time dependent average cannot be easily controlled by $|\barf(0)|$ as in the case of CHO equation.
Clearly (I9) entails (H2)-(H4) and (I8). Such assumptions ensures that for any $\ff_0\in \mathcal{Y}$, where
\[
\mathcal{Y}:=\bigl\{\ff\in H:\, F(\ff)\in L^1(\Omega)\bigr\},
\]
there is a unique global weak solution $\ff$ owing to Theorems~\ref{buona posizione CHB} and~\ref{buona posizione CHB2}.
Thus we can define a semigroup $S(t):\mathcal{Y} \to\mathcal{Y}$ by setting $\ff(t):=S(t)\ff_0$. Here $\mathcal{Y}$ is
equipped with the metric \eqref{CHO metrica attrattore}.

As in the previous section, we will show that the dynamical system $(\mathcal{Y},S(t))$ has a bounded absorbing set.
Then, the same argument used to prove Theorem~\ref{AGCHO} will lead to the existence of the (connected) global attractor.
However, a crucial preliminary step is the uniform (dissipative) bound of $|\barf(t)|$ for all $t>0$.

\subsection{Dissipative bound for $|\barf(t)|$}
In this section we extend the approach devised in~\cite{miranville pre} for the local CHBEG equation to prove the following:

\begin{proposition}
\label{media controllata a dovere}
Let (H0)-(H1), (I6)-(I7), (I9)-(I10) hold. If $\Vert \ff_0\Vert\leq R$, then there exist $c_R$ and $C_{10}$ such that
\begin{equation}
\label{CHB tratt 15}
|\barf(t)|\leq c_R e^{-ct}+ C_{10},\quad \forall t\geq 0.
\end{equation}
\end{proposition}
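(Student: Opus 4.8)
\emph{Plan.} The strategy is to extract a closed system of two differential inequalities --- one for the spatial average $\barf$, one for the fluctuation $\ff-\barf$ --- and then to close the loop with a single Lyapunov functional, adapting to the nonlocal setting the idea of~\cite{miranville pre} for the local equation. First I would isolate the evolution of the average. Choosing $\psi\equiv 1$ in the weak formulation \eqref{weakCHBEG} (with $g=0$) makes the terms $(\nabla\rho,\nabla\psi)$, $(\nabla J\ast\ff,\nabla\psi)$ and $(u\ff,\nabla\psi)$ vanish, so that $\dert\barf+\frac1{|\Omega|}\mint\la(\ff-h)=0$. Setting $\la_1:=\mint\la\,dx>0$ (legitimate since, by (I6), $\la\ge 0$ and $\la\not\equiv 0$) and splitting $\ff=\barf+(\ff-\barf)$ gives $\dert\barf+\frac{\la_1}{|\Omega|}\barf=-\frac1{|\Omega|}\mint\la(\ff-\barf)+\frac1{|\Omega|}\mint\la h$. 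Multiplying by $\barf$, bounding $\bigl|\mint\la(\ff-\barf)\bigr|\le\la^*|\Omega|^{1/2}\|\ff-\barf\|$ and using (I7) with Young's inequality, I obtain a first inequality of the form
\[
\dert\barf^2+\frac{\la_1}{|\Omega|}\barf^2\le c\,\|\ff-\barf\|^2+c .
\]
Thus $\barf$ is damped at the reaction rate $\la_1/|\Omega|$, but the damping is polluted by the fluctuation; controlling $\|\ff-\barf\|$ is the crux, and is exactly what makes this harder than the CHO case, where the average decouples completely.

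Next I would test \eqref{weakCHBEG} with $\psi=\ff$ (admissible since $\ff\in L^2(0,T;V)$ with $\ff_t\in L^2(0,T;V')$). Since $\mu=\rho-J\ast\ff$, the two diffusive contributions combine into $(\nabla\mu,\nabla\ff)$, bounded below by the coercivity estimate \eqref{gradfn in V 3}, namely $(\nabla\mu,\nabla\ff)\ge\frac{c_0}2\|\nabla\ff\|^2-c\|\ff\|^2$. The reaction term gives $\mint\la\ff^2-\mint\la h\ff\ge\frac12\mint\la\ff^2-c$ by (I6)--(I7), and the convective term is integrated by parts, $(u\ff,\nabla\ff)=-\frac12(\nabla\cdot u,\ff^2)\le c\|\ff\|^2$: the boundary term vanishes because $u\in H^1_0$, and the volume term is controlled precisely by the assumption $\nabla\cdot u\in L^\infty(\Omega)$ in (I10). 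Collecting these, applying the Poincar\'e--Wirtinger inequality to the zero--mean function $\ff-\barf$ (so $\|\nabla\ff\|^2\ge C_p^{-1}\|\ff-\barf\|^2$) and using the mean part $\mint\la\ff^2\ge\frac{\la_1}2\barf^2-c\|\ff-\barf\|^2$, I would arrive at a second inequality of the form
\[
\dert\|\ff-\barf\|^2+\gamma\,\|\ff-\barf\|^2\le\beta\,\barf^2+c .
\]

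Finally I would close the loop with the Lyapunov functional $\mathcal{L}:=\|\ff-\barf\|^2+\kappa\,\barf^2$, choosing $\kappa>0$ so that the two cross terms are absorbed into the two damping terms; this produces $\dert\mathcal{L}+\delta\mathcal{L}\le c$ for some $\delta>0$, and Gronwall's lemma gives $\mathcal{L}(t)\le\mathcal{L}(0)e^{-\delta t}+c/\delta$. Since $\barf(0)^2\le\|\ff_0\|^2/|\Omega|\le R^2/|\Omega|$ and $\|\ff_0-\barf(0)\|^2\le\|\ff_0\|^2\le R^2$, the quantity $\mathcal{L}(0)$ is bounded by a constant $c_R$ depending only on $R$, and taking square roots yields \eqref{CHB tratt 15}.

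The \emph{main obstacle} is this last step: the Cahn--Hilliard operator annihilates the average (which lies in its kernel), so $\barf$ is damped only through the reaction while the fluctuation is damped only through the gradient dissipation, and the two inequalities feed back into each other through $\overline{\la(\ff-\barf)}$ and through $\barf^2$. Closing the Gronwall argument therefore hinges on a genuine balance between the coupling constants $\beta$, $c$ and the damping rates $\gamma$, $\la_1/|\Omega|$. It is here that the quartic growth (I9) is decisive: it both reinforces the coercivity hidden in $(\nabla\mu,\nabla\ff)$ (via the term $\mint\ff^2|\nabla\ff|^2$) and, through $F(s)\ge c_6 s^4-c_7$, lets the lower--order nonlocal cross terms generated by the kernel $J$ be absorbed. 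Verifying this balance, automatic in the local case of~\cite{miranville pre}, is the delicate computational heart of the adaptation to the nonlocal problem.
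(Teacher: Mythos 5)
Your opening step (take $\psi=1$, obtain $\dert\barf+\overline{\la(\ff-h)}=0$ and hence a damped ODE for $\barf$ driven by $\|\ff-\barf\|$) agrees with the paper. The gap is in the second inequality and in how you propose to close the loop. First, the claimed fluctuation estimate $\dert\|\ff-\barf\|^2+\gamma\|\ff-\barf\|^2\le\beta\barf^2+c$ is not obtainable from the ingredients you list: the coercivity \eqref{gradfn in V 3} reads $(\nabla\mu,\nabla\ff)\ge\frac{c_0}{2}\|\nabla\ff\|^2-c\|\ff\|^2$, where the constant $c$ comes from $\|\nabla a\|_{L^\infty}$ and $\|\nabla J\|_{L^1}$ and has no reason to be dominated by $c_0/(2C_p)$ from Poincar\'e--Wirtinger; accordingly the paper's own $L^2$-level estimate \eqref{CHB trattore 5} contains \emph{no} damping term in $\|\ff-\barf\|^2$. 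Your appeal to the extra positivity $\mint\ff^2|\nabla\ff|^2$ does not repair this, since that quantity vanishes on constants and does not control $\|\ff-\barf\|^2$. Second, even granting both inequalities, your Lyapunov functional $\mathcal{L}=\|\ff-\barf\|^2+\kappa\barf^2$ closes only if $\beta|\Omega|/\la_1<\kappa<\gamma/c$, i.e.\ only under a smallness relation among constants depending on $J$, $u$, $\la^*$, $c_0$ and $\bar\la$ that is not at your disposal. You correctly identify this balance as the crux, but you do not resolve it, and in general it need not hold.

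The idea you are missing is that the paper makes the fluctuation estimate \emph{self-contained}, with no $\barf^2$ on the right-hand side, so that no balance of constants is ever needed. This is done at the $H^{-1}$ level: testing the zero-mean equation with $\psi=A^{-1}(\ff-\barf)$ produces the term $(\mu,\ff-\barf)$, which by the pointwise consequence of (I9),
\begin{equation*}
(F'(\ff)-F'(\barf),\ff-\barf)\ \ge\ c_K\mint\bigl((\ff-\barf)^4+(\ff-\barf)^2\barf^2\bigr)-c\|\ff-\barf\|^2,
\end{equation*}
is bounded below by a quartic quantity. Since $|\barf|^2\|\ff-\barf\|^2=\mint\barf^2(\ff-\barf)^2$, this quartic term absorbs not only the lower-order nonlocal and convective contributions but also the coupling term $c|\barf|\,\|\ff-\barf\|$ by Young's inequality, yielding the closed dissipative inequality \eqref{CHB tratt 3}. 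Gronwall then gives a dissipative $H^{-1}$ bound and, upon integration over $(t,t+1)$, $L^1$-in-time control of the quartic terms and of $\|\ff-\barf\|^2$; the uniform Gronwall lemma applied to \eqref{CHB trattore 5} upgrades this to a uniform $L^2$ bound on $\|\ff-\barf\|$; and only then is the mean equation integrated by variation of constants to obtain \eqref{CHB tratt 15}. Without this $H^{-1}$ step and the quartic absorption, your scheme does not close.
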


\begin{proof}
Choosing $\psi=1$ in  \eqref{weakCHBEG} with $g=0$ we find (cf. \eqref{soluzione debole CHB})
\begin{equation}
\label{test 1 chbt}
\dert\barf+\overline{\lambda(\ff-g)}=0.
\end{equation}
Then, let us multiply \eqref{test 1 chbt} by $\psi\in V$, integrate over $\Omega$ and subtract it to \eqref{weakCHBEG}. This gives
\begin{equation}
\label{CHB trattore 2}
\langle\ff_t-\barf_t,\psi\rangle+(\nabla\mu,\nabla\psi)+(\lambda(\ff-h)-\overline{\lambda(\ff-g)},\psi)=(\ff u,\nabla\psi).
\end{equation}
On the other hand, since $a\barf-J\ast\barf=0$, we have
\begin{equation}
\label{CHB tratt 01}
(\mu,\ff-\barf)=(a(\ff-\barf)-J\ast(\ff-\barf)+F'(\ff),\ff-\barf)\geq (F'(\ff),\ff-\barf)-c\|\ff-\barf\|^2.
\end{equation}
Besides, following~\cite{miranville pre}, we get
\[
(F'(\ff),\ff-\barf)=(F'(\ff)-F'(\barf),\ff-\barf)
\geq c_K\mint\bigl( (\ff-\barf)^4+(\ff-\barf)^2\barf^2\bigr)-c\|\ff-\barf\|^2.
\]
Thus \eqref{CHB tratt 01} yields
\begin{equation}
\label{CHB tratt 0}
(\mu,\ff-\barf)\geq C_{11}\mint\bigl( (\ff-\barf)^4+(\ff-\barf)^2\barf^2\bigr)-c\|\ff-\barf\|^2.
\end{equation}

By arguing as in the proof of Corollary~\ref{buona posizione CHB2} we find
\[
(\lambda(\ff-g)-\overline{\lambda(\ff-g)},(-\Delta)^{-1}(\ff-\barf))\leq 2\|\ff-\barf\|^2+c|\barf| \|\ff-\barf\|+c.
\]
Besides, we have
\begin{equation}
\label{CHB tratt 2}
(u\ff,\nabla(-\Delta)^{-1}(\ff-\barf))\leq c\|\ff\|\|\ff- \barf \|\leq c\|\ff-\barf\|^2+c|\barf| \|\ff-\barf\|.
\end{equation}
\noindent
Let us now choose $\psi=(-\Delta)^{-1}(\ff-\barf)$ in \eqref{CHB trattore 2} and exploit \eqref{CHB tratt 0}-\eqref{CHB tratt 2}. We obtain
$$
\frac{1}{2}\dert\|\ff-\barf\|^2_{-1}+C_{12}\mint\bigl( (\ff-\barf)^4+(\ff-\barf)^2\barf^2\bigr)\leq c\|\ff-\barf\|^2+c|\barf|\|\ff-\barf\|+c.
$$
Observe now that
$$
\|\ff-\barf\|_{-1}^2+\|\ff-\barf\|^2+c\|\ff-\barf\|^2+c|\barf|\|\ff-\barf\|\leq \frac{C_{11}}{4}\mint\bigl( (\ff-\barf)^4+(\ff-\barf)^2\barf^2\bigr)+c.
$$
Therefore we infer
\begin{equation}
\label{CHB tratt 3}
\dert\|\ff-\barf\|^2_{-1}+\|\ff-\barf\|_{-1}^2+\|\ff-\barf\|^2+C_{12}\mint\bigl( (\ff-\barf)^4+(\ff-\barf)^2\barf^2\bigr)\leq C_{12}
\end{equation}
and using the Gronwall lemma, we deduce that
\[
\|\ff-\barf\|_{-1}^2\leq \|\ff_0-\barf_0 \|_{-1}^2e^{-t}+C_{12}.
\]

Let $R>0$. Then we can find $t_0(R)>0$ such that for all $\ff_0$ such that $\Vert\ff_0\Vert \leq R$ there holds
$$
S(t)\ff_0\in B_0:=\bigl\{\psi \in V':\, \|\psi-\bar{\psi}\|^2_{-1}\leq 2C_{12} \bigr\}, \quad\forall\,t\geq t_0.
$$
By integrating \eqref{CHB tratt 3} with respect to time between $t\geq t_0$ and $t+1$, we get
\begin{align}
\label{CHB tratt 5.1}
&\int_t^{t+1}\mint\bigl( (\ff-\barf)^4+(\ff-\barf)^2\barf^2\bigr)\leq C_{13}\\
&\label{CHB tratt 5.2}
\int_t^{t+1}\|\ff-\barf\|^2\leq C_{13}.
\end{align}

\noindent
Choose now $\psi=\ff-\barf$ in \eqref{CHB trattore 2}. This gives
\begin{equation}
\label{CHB trattore 4}
\frac{1}{2}\dert\|\ff-\barf\|^2+(\nabla\mu,\nabla(\ff-\barf))+(\lambda(\ff-h)-\overline{\lambda(\ff-h)},\ff-\barf)=(\ff u,\nabla(\ff-\barf)).
\end{equation}
Since $a\barf-J\ast\barf=0$, thanks to (H2) and Young's lemma, we have
\begin{align}
\nonumber
&(\nabla\mu,\nabla(\ff-\barf))=(F''(\ff)\nabla\ff+a\nabla\ff+(\ff-\barf)\nabla a -\nabla J\ast (\ff-\barf), \nabla\ff)
\\
\label{CHB tratt 6}
&\geq c_0\|\nabla\ff\|^2-c\|\nabla\ff\|\|\ff-\barf\|
\geq \frac{3 c_0}{4}\|\nabla\ff\|^2-c\|\ff-\barf\|^2.
\end{align}

\noindent
Besides, there holds
\[
(\lambda(\ff-h)-\overline{\lambda(\ff-h)},\ff-\barf) \leq c+\lambda^*(\|\ff-\barf\|^2+\|\ff-\barf\| |\barf|).
\]

\noindent
Observing now that
$$
(\ff u,\nabla(\ff-\barf))=-(\nabla\cdot(\ff u),\ff-\barf)=-(u\cdot\nabla\ff ,\ff-\barf)-(\ff\nabla\cdot u ,\ff-\barf),
$$
and recalling (I10), we obtain
\begin{equation}
\label{CHB tratt 8}
(\ff u,\nabla(\ff-\barf))\leq c\|\ff-\barf\|^2+ \frac{c_0}{4}\|\nabla\ff\|^2 + c|\barf| \|\ff-\barf\|.
\end{equation}

\noindent
Taking \eqref{CHB tratt 6}-\eqref{CHB tratt 8} into account, from \eqref{CHB trattore 4} we deduce
\begin{equation}
\label{CHB trattore 5}
\dert\|\ff-\barf\|^2+c_0\|\nabla\ff\|^2  \leq c\|\ff-\barf\|^2 + c|\barf| \|\ff-\barf\|
\leq c+ c\mint\bigl( (\ff-\barf)^4+(\ff-\barf)^2\barf^2\bigr).
\end{equation}
Then, by means of the uniform Gronwall lemma, on account of \eqref{CHB tratt 5.1} and \eqref{CHB tratt 5.2}, we find
\begin{equation}
\label{CHB tratt 9}
\|(\ff-\barf)(t)\|^2\leq C_{14},\qquad\forall t\geq t_0+1.
\end{equation}
Furthermore, by integrating inequality \eqref{CHB trattore 5} with respect to time between $0$ and $t\leq t_0+1$,
we get
$$
\|(\ff-\barf)(t)\|^2\leq C_{15}\left(1 + \tint \mint\bigl( (\ff-\barf)^4+(\ff-\barf)^2\barf^2\right)
$$
and thanks to \eqref{CHB tratt 3} we are led to
\begin{equation}
\label{CHB tratt 10}
\|(\ff-\barf)(t)\|^2\leq c_R,\qquad\forall t\leq t_0+1.
\end{equation}

We are now ready to recover an estimate for $\barf$. Let us rewrite equation \eqref{test 1 chbt} as
\[
\dert\barf+\bar{\lambda}\barf=-\overline{\lambda(\ff-\barf-h)},
\]
so that
$$
\barf(t)=e^{-\bar{\lambda}t}\barf_0-e^{-\bar{\lambda}t}\tint e^{\bar{\lambda}s}\mint\lambda(\ff-\barf-h)\,ds,
$$
and
\begin{equation}
\label{CHB tratt 13}
|\barf(t)|\leq e^{-\bar{\lambda}t}|\barf_0| + e^{-\bar{\lambda}t}\tint e^{\bar{\lambda}s}\mint |\lambda(\ff-\barf-h)|\,ds.
\end{equation}

\noindent
On the other hand, we have
$$
\tint e^{\bar{\lambda}s}\mint |\lambda(\ff-\barf-h)|\,ds \leq \lambda^*|\Omega|^{\frac{1}{2}}\tint e^{\bar{\lambda}s}(\|\ff-\barf\|)\,ds+
\frac{\lambda^*|\Omega|^{\frac{1}{2}}}{\bar{\lambda}}e^{\bar{\lambda}t}\|h\|.
$$
Thus from \eqref{CHB tratt 13} we infer
\begin{equation}
\label{CHB tratt 14}
|\barf(t)|\leq e^{-\bar{\lambda}t}|\barf_0| + C_{16} + e^{-\bar{\lambda}t}\lambda^*|\Omega|^{\frac{1}{2}} \tint e^{\bar{\lambda}s} \|\ff-\barf\|\,ds.
\end{equation}
Finally, for $t\geq t_0+1$, we can use \eqref{CHB tratt 9} and \eqref{CHB tratt 10} to control the last term of the above inequality
\begin{align}
\nonumber
&e^{-\bar{\lambda}t}\lambda^*|\Omega|^{\frac{1}{2}} \tint e^{\bar{\lambda}s} \|\ff-\barf\|\,ds
=e^{-\bar{\lambda}t}\lambda^*|\Omega|^{\frac{1}{2}} \int_0^{t_0+1} e^{\bar{\lambda}s} \|\ff-\barf\|\,ds
\\
\label{CHB tratt 16}
&+e^{-\bar{\lambda}t}\lambda^*|\Omega|^{\frac{1}{2}} \int^t_{t_0+1} e^{\bar{\lambda}s} \|\ff-\barf\|\,ds
\leq \lambda^*|\Omega|^{\frac{1}{2}}c_R e^{-\bar{\lambda}(t-(t_0+1))}+ C_{17}.
\end{align}
Therefore, from \eqref{CHB tratt 14} and \eqref{CHB tratt 16} we deduce  \eqref{CHB tratt 15}.
\end{proof}

\subsection{Bounded absorbing sets and the global attractor}

Thanks to Proposition~\ref{media controllata a dovere} and arguing as in the proof of Proposition~\ref{S absorbing},
we can prove the following
\begin{proposition}
\label{S absorbing CHB}
Let (H0)-(H1), (I6)-(I7), (I9)-(I10) hold. Then $(\mathcal{Y},S(t))$ has a bounded absorbing set.
\end{proposition}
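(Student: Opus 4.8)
The plan is to adapt the proof of Proposition~\ref{S absorbing} almost verbatim, the only genuinely new ingredient being the dissipative bound on $|\barf(t)|$ supplied by Proposition~\ref{media controllata a dovere}. First I would start from the energy equality \eqref{energy equality CHB} with $g=0$, namely
\[
\dert \EE + \|\nabla\mu\|^2 + (\la(\ff-h),\mu) = (u\ff,\nabla\mu),
\]
and control its right-hand side together with the reaction contribution exactly as in the proof that the local solution is global: writing $-(\la(\ff-h),\mu)=-(\la\ff,\mu)+(\la h,\mu)$ and invoking \eqref{chb u1}, \eqref{chb u2} and \eqref{chb u4}, one absorbs the $\|\nabla\mu\|^2$ contributions coming from the convective and source terms and is left with
\[
\dert \EE + \tfrac12\|\nabla\mu\|^2 \leq C_1\|\ff\|^2 + C_2\mint F(\ff) + C_3,
\]
which is the exact analogue of \eqref{stima energia k}.

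From here I would reproduce the chain \eqref{mu-mumedio1}--\eqref{stima assorbente 2 fine}. I would estimate $(C_2+1)(\mu,\ff-\barf)=(C_2+1)(\mu-\barm,\ff)$ from above by the Poincar\'e--Wirtinger inequality as in \eqref{mu-mumedio1}, and then from below using the convexity of $F(s)+\frac{a^*}{2}s^2$ (Remark~\ref{perturbation}) to recover a copy of $\tfrac12\EE$ and a multiple of $\mint F(\ff)$, as in \eqref{mu-mumedio99}. Since (I9) entails (H9) with $q=1$ (indeed $F''(s)+a(x)=3s^2-1+a(x)\geq 3s^2-1$), the coercivity step producing \eqref{stima assorbente 2 fine} carries over unchanged, and combining it with the energy inequality above yields the dissipative estimate
\[
\dert \EE + \tfrac12\EE \leq C_9\bigl(1+\mathcal{H}(t)\bigr),
\]
the analogue of \eqref{dissineqCHO}.

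The one place where the CHO argument genuinely breaks down is the construction of the function $\mathcal{H}(t)$ bounding $F(\barf)$ in \eqref{chb as 98}--\eqref{chb as 99}: in the CHO case this rested on the explicit linear ODE \eqref{mass perde intro} for $\barf$, which is unavailable here, and this is precisely the obstacle I expect to be the hard part. It is exactly the difficulty that Proposition~\ref{media controllata a dovere} removes. Since $F$ is the quartic \eqref{dwp poli}, the quantity $F(\barf(t))=\frac14(\barf(t)^2-1)^2$ is a polynomial in $|\barf(t)|$, and \eqref{CHB tratt 15} gives $|\barf(t)|\leq c_R e^{-ct}+C_{10}$ whenever $\|\ff_0\|\leq R$; hence $F(\barf(t))$ is bounded by a constant $c_R$ on $[0,t_0+1]$ and by an $R$-independent constant for all $t\geq t_0+1$, which is exactly the role played by $\mathcal{H}(t)$. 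Feeding this into the lower bound for $(\mu,\ff-\barf)$ and applying Gronwall's lemma to the dissipative inequality for $t\geq t_0+1$ produces a uniform bound on $\EE$, hence, via (H3)/coercivity, on $\|\ff\|^2+\mint F(\ff)$, and therefore a bounded absorbing set for $(\mathcal{Y},S(t))$ in the metric \eqref{CHO metrica attrattore}, completing the proof.
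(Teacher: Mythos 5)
Your proposal is correct and follows exactly the route the paper intends: its proof of Proposition~\ref{S absorbing CHB} consists precisely of the one-line instruction to combine the dissipative bound on $|\barf(t)|$ from Proposition~\ref{media controllata a dovere} with the argument of Proposition~\ref{S absorbing}, which is what you carry out. You also correctly identify the only genuinely new point, namely that the function $\mathcal{H}(t)$ controlling $F(\barf)$ can no longer be read off an explicit ODE for $\barf$ and must instead come from \eqref{CHB tratt 15}.
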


Finally, adapting the proof of Theorem~\ref{AGCHO}, we get
\begin{theorem}
Let the assumptions of Proposition~\ref{S absorbing CHB} hold. Then $(\mathcal{Y},S(t))$ has the (connected) global attractor.
\end{theorem}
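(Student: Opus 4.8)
The plan is to reproduce, step by step, the argument used for Theorem~\ref{AGCHO}, the only genuinely new ingredient being the uniform control of the spatial average furnished by Proposition~\ref{media controllata a dovere}. By Proposition~\ref{S absorbing CHB} the semigroup $S(t)$ possesses a bounded absorbing set in $\mathcal{Y}$; together with the continuous dependence estimate \eqref{stability CHB} this shows that $S(t)$ is an eventually bounded, continuous semiflow in the sense of Ball~\cite{Ball}. Hence, by the same abstract result invoked in the proof of Theorem~\ref{AGCHO} (see~\cite[Theorem~2]{grass trattore}), it suffices to establish asymptotic compactness, that is, that $S(t)$ maps any bounded sequence of initial data into a sequence admitting a strongly convergent subsequence in $\mathcal{Y}$ for every $t>0$.

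To this end I would adapt the energy method. Setting $g=0$, the energy equality \eqref{energy equality CHB} suggests introducing the modified functional
\[
\tilde{\mathcal{E}}(\ff(t)):=\EE-\tint(u\ff,\nabla\mu)+\tint(\la(\ff-h),\mu),
\]
which, by \eqref{energy equality CHB}, satisfies $\dert\tilde{\mathcal{E}}(\ff(t))=-\|\nabla\mu\|^2\leq 0$ and is therefore nonincreasing and continuous on $[0,+\infty)$. Given a bounded sequence $\{\ff_j(0)\}_j$ in $\mathcal{Y}$ and writing $\ff_j(t):=S(t)\ff_j(0)$, the dissipative estimates underlying Proposition~\ref{S absorbing CHB}, reinforced by the uniform bound on $|\barf_j(t)|$ from \eqref{CHB tratt 15}, yield bounds for $\ff_j$ in $L^\infty(0,T;H)\cap L^2(0,T;V)\cap H^1(0,T;V')$, for $\mu_j$ in $L^2(0,T;V)$, and for $F(\ff_j)$ in $L^\infty(0,T;L^1(\Omega))$, all independent of $j$. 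Standard compactness (Aubin--Lions) then provides non-relabeled subsequences converging weakly$^*$, weakly, and strongly in $L^2(0,T;H)$ and a.e.\ in $\Omega\times(0,T)$ to a limit $(\ff,\mu)$; since (I9) fixes the explicit quartic potential, the a.e.\ convergence together with continuity of $F'$ lets me identify $\mu=a\ff-J\ast\ff+F'(\ff)$ and check that $\ff$ is itself a weak solution.

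The heart of the matter is to upgrade these weak convergences to the strong convergence $\ff_j(t)\to\ff(t)$ in $\mathcal{Y}$ for every $t>0$. Following~\cite[Proof of Theorem~3]{grass trattore}, the strong $L^2$ and a.e.\ convergence let me pass to the limit in the two correction terms of $\tilde{\mathcal{E}}$: the convective integral is handled using the regularity of $u$ granted by (I10), and the reaction integral using (I6)-(I7) together with the uniform bounds just obtained, so that $\tint(u\ff_j,\nabla\mu_j)\to\tint(u\ff,\nabla\mu)$ and $\tint(\la(\ff_j-h),\mu_j)\to\tint(\la(\ff-h),\mu)$ for a.e.\ $t$; combined with the weak lower semicontinuity argument of~\cite{grass trattore} this gives $\tilde{\mathcal{E}}(\ff_j(t))\to\tilde{\mathcal{E}}(\ff(t))$ a.e. Since each map $t\mapsto\tilde{\mathcal{E}}(\ff_j(t))$ and $t\mapsto\tilde{\mathcal{E}}(\ff(t))$ is nonincreasing and continuous, this a.e.\ convergence propagates to every $t>0$, whence $\EEj\to\EE$ for all $t>0$. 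Recalling the metric \eqref{CHO metrica attrattore}, this convergence together with the strong $H$-convergence yields $\ff_j(t)\to\ff(t)$ in $\mathcal{Y}$, establishing asymptotic compactness and hence, by Ball's theorem, the existence of the global attractor.

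I expect the delicate point to be precisely the passage to the limit in the reaction correction term and the control of the spatial average: unlike the CHO case, where $|\barf(t)|\leq|\barf_0|$ follows immediately from \eqref{edomedia-1}, here $\barf$ obeys the nonhomogeneous ODE driven by $\overline{\la(\ff-h)}$ and is not controlled by $|\barf_0|$ alone, so the uniform dissipative bound \eqref{CHB tratt 15} is indispensable both for the uniform estimates on the sequence and for the identification of the limit. Finally, connectedness follows exactly as in Theorem~\ref{AGCHO}: the estimate on $d(0,t\psi)$ shows that $\mathcal{Y}$ is path-connected via the scaling $t\mapsto t\psi$, so that~\cite[Corollary~4.3]{Ball} guarantees the attractor is connected as well.
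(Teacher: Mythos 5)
Your proposal is correct and takes essentially the same route as the paper, whose entire proof of this theorem is the single phrase ``adapting the proof of Theorem~\ref{AGCHO}'': you supply exactly the intended adaptation, namely Ball's energy method with the modified functional $\tilde{\mathcal{E}}(\ff(t))=\EE-\tint(u\ff,\nabla\mu)+\tint(\la(\ff-h),\mu)$ (nonincreasing by \eqref{energy equality CHB} with $g=0$), the absorbing set from Proposition~\ref{S absorbing CHB}, the uniform average bound of Proposition~\ref{media controllata a dovere} as the genuinely new ingredient, and the same connectedness argument for $\mathcal{Y}$. No gaps relative to the paper's own (much terser) treatment.
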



\section*{Acknowledgments} This work was the subject of the first author's Master's thesis at the Politecnico di Milano. His work was partially supported by the Engineering and Physical Sciences Research Council [EP/L015811/1]. The second author is a member of the Gruppo Nazionale per l'Analisi Matematica, la Probabilit\`{a} e le loro Applicazioni (GNAMPA) of the Istituto Nazionale di Alta Matematica (INdAM).


\medskip
\medskip

\end{document}